\newcounter{codecount}
\newcounter{resultcount}
\theoremstyle{definition}
\newtheorem{theorem}{Theorem}[section]
\newtheorem{prop}[theorem]{Proposition}
\newtheorem{lemma}[theorem]{Lemma}
\newtheorem{cor}[theorem]{Corollary}
\newtheorem{remark}[theorem]{Remark}
\newtheorem{note}[theorem]{Notation}
\numberwithin{theorem}{section}
\newcommand\SL{\operatorname{SL}}
\newcommand\PGL{\operatorname{PGL}}
\newcommand\PSL{\operatorname{PSL}}
\newcommand\Sing{\operatorname{Sing}}
\newcommand\Stab{\operatorname{Stab}}
\newcommand\Gal{\operatorname{Gal}}
\author{Yusuke Yoshida}
\title{$\mathfrak{A}_{6}$-invariant curves of genera $10$ and $19$}
\begin{document}
\maketitle

\begin{abstract}
We study smooth curves on which the alternating group $\mathfrak{A}_{6}$ acts faithfully. Let $\mathcal{V} \subset \PGL(3, \mathbb{C})$ be the Valentiner group, which is isomorphic to $\mathfrak{A}_{6}$. We see that there are integral $\mathcal{V}$-invariant curves of degree $12$ which have geometric genera $10$ and $19$. On the other hand, if $\mathfrak{A}_{6}$ acts faithfully on a curve $C$ of genus $10$ or $19$, then we give an explicit description of the extension $k(C / \mathfrak{A}_{5}) \rightarrow k(C / \mathfrak{A}_{6})$ for any icosahedral subgroup $\mathfrak{A}_{5}$. Using this, we show the uniqueness of smooth projective curves of genera $10$ and $19$ whose automorphism groups contain $\mathfrak{A}_{6}$.
\end{abstract}

\section{Introduction}

Automorphism groups of algebraic curves have long been studied. Recently, Harui gave a classification of automorphism groups of smooth plane curves over $\mathbb{C}$ (\cite{Harui}). In this classification, ``primitive'' subgroups of $\PGL(3, \mathbb{C})$ occupies an important part. They are either conjugate to the Valentiner group $\mathcal{V} \cong \mathfrak{A}_{6}$, the icosahedral group $\mathcal{I} \cong \mathfrak{A}_{5}$, the Klein group $\mathcal{K} \cong \PSL(2, \mathbb{F}_{7})$, the Hessian group $H_{216}$ or its subgroup $H_{72}$ or $H_{36}$. In \cite{Y}, the author studied projective plane curves invariant under $G = \mathcal{V}$, $\mathcal{I}$ or $\mathcal{K}$, and determined all degrees of nonsingular (resp. integral) curves whose automorphism groups contain $G$. For the Valentiner group $\mathcal{V}$, the result is as follows.
\begin{theorem}\label{deg of V inv cur}
({\cite[Theorem 3.7, Theorem 4.4]{Y}})
Let $d$ be a positive integer.
\begin{enumerate}
\item There is a nonsingular projective plane curve of degree $d$ whose automorphism group is $\mathcal{V}$ (on equivalently contains $\mathcal{V}$) if and only if $d \equiv 0$, $6$ or $12 \mod 30$.
\item There exists an integral projective plane curve of degree $d$ invariant under $\mathcal{V}$ if and only if $d$ is a multiple of $6$, $d \neq 18$ and $d \neq 24$.
\end{enumerate}
\end{theorem}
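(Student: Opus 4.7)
My plan is to exploit the classical description of the invariant ring $R = \mathbb{C}[x,y,z]^{\mathcal{V}}$. Since the lift of $\mathcal{V}$ to $\SL(3,\mathbb{C})$ is the perfect triple cover $3.\mathfrak{A}_{6}$, every $\mathcal{V}$-invariant curve is cut out by a proper invariant polynomial. By Wiman's classical computation, $R = \mathbb{C}[C_{6}, C_{12}, C_{30}, C_{45}]/(C_{45}^{2} - P(C_{6}, C_{12}, C_{30}))$ (subscripts denote degrees), and $C_{45}$ factors as a product of $45$ linear forms (the Valentiner lines). Hence $R_{d}$ is non-zero exactly for $d$ a non-negative integer combination of $6, 12, 30$ or for $d \geq 45$ with $d \equiv 3 \pmod 6$; in the latter range every invariant is divisible by $C_{45}$ and therefore reducible, which already shows that only multiples of $6$ can support integral invariant curves.

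For part (2) I would enumerate the monomials $C_{6}^{a} C_{12}^{b} C_{30}^{c}$ with $6a+12b+30c = d$. When $d = 18$, the only such monomials are $C_{6}^{3}$ and $C_{6}C_{12}$, both divisible by $C_{6}$; when $d = 24$, every element of $R_{d}$ lies in $\mathbb{C}[C_{6}^{2}, C_{12}]$ and factors as $(C_{12}-\alpha C_{6}^{2})(C_{12}-\beta C_{6}^{2})$. For the remaining multiples of $6$ I would exhibit an irreducible element: for $d = 6, 12$ the generators $C_{6}, C_{12}$ themselves, and for $d \geq 30$ a general linear combination involving the monomial $C_{30}^{c}$ (or $C_{6}C_{30}^{c}$, $C_{12}C_{30}^{c}$ as the residue dictates), with irreducibility of the general member following by showing the reducible locus is a proper closed subvariety of $R_{d}$.

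For part (1), the key observation is a residue dichotomy: for $d \equiv 18$ or $24 \pmod{30}$ every monomial $C_{6}^{a} C_{12}^{b} C_{30}^{c}$ of degree $d$ satisfies $a+b \geq 2$, while for $d \equiv 0, 6, 12 \pmod{30}$ there is a monomial with $a+b \leq 1$ (namely $C_{30}^{c}$, $C_{6}C_{30}^{c}$ or $C_{12}C_{30}^{c}$). A short chain-rule calculation then shows that in the first case every $F \in R_{d}$ together with all its first partial derivatives vanishes on the $72$-point scheme $V(C_{6}) \cap V(C_{12})$, so every $\mathcal{V}$-invariant curve of such degree is singular, eliminating these residues. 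In the second case I would build smooth curves by starting from a known smooth invariant in low degree (Wiman's sextic $C_{6} = 0$, a general smooth member of $R_{12}$, a general member of $R_{30}$) and then, for higher $d$, multiplying by powers of $C_{30}$ and perturbing within $R_{d}$, using Bertini away from the base locus and a direct local computation at each base point.

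The hardest step will be this last construction: producing a smooth member of $R_{d}$ for every admissible $d$. Bertini alone is insufficient because the base locus of $R_{d}$ contains nontrivial $\mathcal{V}$-orbits coming from $V(C_{6}) \cap V(C_{12})$ and $V(C_{30})$; by $\mathcal{V}$-equivariance the local check reduces to a single transversality calculation per special orbit, but carrying this out uniformly in $d$ will require an inductive ``raise the degree by $30$ via $C_{30}$'' argument, together with a careful verification that smoothness at the special orbits is preserved under this operation.
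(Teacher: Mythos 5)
This theorem is not proved in the paper at all: it is imported verbatim from \cite[Theorem 3.7, Theorem 4.4]{Y}, so there is no in-paper argument to compare against and your attempt can only be judged on its own terms. Your overall strategy --- work with the invariant ring generated in degrees $6,12,30,45$, kill the odd degrees via divisibility by the degree-$45$ product of the Valentiner lines, dispose of $d=18,24$ by explicit factorization, and prove the residue dichotomy $a+b\geq 2$ versus $a+b\leq 1$ for monomials $C_6^aC_{12}^bC_{30}^c$ --- is sound, and the singularity argument (every member of degree $\equiv 18,24 \pmod{30}$ is singular along $V(C_6)\cap V(C_{12})$, which is nonempty by B\'ezout) together with the Bertini-plus-local-transversality construction for $d\equiv 0,6,12\pmod{30}$ is exactly the kind of proof one expects behind the cited result.

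There is, however, a concrete gap in part (2). For $d\equiv 18$ or $24\pmod{30}$ with $d\geq 48$ (e.g.\ $d=48,54,78,84,\dots$) the theorem still asserts the existence of an \emph{integral} invariant curve, but your recipe ``take a general combination involving $C_{30}^c$, $C_6C_{30}^c$ or $C_{12}C_{30}^c$'' is vacuous there: by your own residue dichotomy no monomial of degree $d$ has $a+b\leq 1$, so none of those three monomials occurs. Worse, you cannot fall back on ``smooth $\Rightarrow$ integral'', since your part-(1) argument shows every member of $\mathfrak{d}_d$ in these residues is singular at the $72$ points of $V(C_6)\cap V(C_{12})$. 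What is needed instead is the irreducibility form of Bertini: the system $\mathfrak{d}_d$ has no fixed component (it contains both $C_6^{d/6}$ and, say, $C_{12}^bC_6^a$ or a $C_{30}$-monomial with no common factor) and is not composed with a pencil, the latter because $C_6,C_{12},C_{30}$ are algebraically independent (they form a homogeneous system of parameters), so the rational map defined by $\mathfrak{d}_d$ has two-dimensional image; hence the general member is irreducible and reduced even though singular. Saying ``the reducible locus is a proper closed subvariety'' without this input is circular, since properness is exactly what must be proved. A smaller omission: the parenthetical claim in (1) that the automorphism group \emph{equals} $\mathcal{V}$ requires the maximality of $\mathcal{V}$ among finite subgroups of $\PGL(3,\mathbb{C})$ together with the fact that all automorphisms of a smooth plane curve of degree $\geq 4$ are linear; this is not addressed in your sketch.
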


The next step would be to study, for each degree, the existence of curves invariant under such a group of a given\emph{ geometric genus}. This is also related to the problem of finding birational plane models of $\mathfrak{A}_{6}$-invariant curves. For example, by Theorem \ref{deg of V inv cur}, a smooth plane model of an $\mathfrak{A}_{6}$-invariant curve of genus $g = \dfrac{1}{2}(d - 1)(d - 2)$ for $d \equiv 0$, $6$ or $12 \mod 30$ is given.

A smooth projective curve over $\mathbb{C}$ can be also seen as a compact Riemann surface. Automorphism groups of compact Riemann surfaces have been studied with the help of the Fuchsian groups. According to Breuer's book \cite[Ch.5, \S19 -- \S20]{Breuer}, the genera of compact Riemann surfaces whose automorphism groups have order $360$ are $10$, $19$ and so on. We have a genus $10$ curve whose automorphism group is isomorphic to $\mathfrak{A}_{6}$ as a $\mathcal{V}$-invariant smooth plane curve of degree $6$, but there is no smooth projective plane curve of genus $19$. On the other hand, if we take singular curves into consideration, then it turns out that we can give a plane model invariant under $\mathcal{V}$. In this paper, we find $\mathcal{V}$-invariant projective plane curves of degree $12$ with genera $10$ and $19$ and show the birational uniqueness in each genus:
\begin{theorem}\label{main theorem}
Let $C$ be a smooth projective curve of genus $g$ on which the alternating group $\mathfrak{A}_{6}$ acts faithfully. For $g = 10$ and $19$, $C$ is unique up to birational equivalence.
\begin{enumerate}
\item If $g = 10$, then $C$ is isomorphic to the following:
\begin{itemize}
\item the smooth projective plane curve of degree $6$ invariant under $\mathcal{V}$,
\item the normalization of the unique integral $\mathcal{V}$-invariant projective plane curve of degree $12$ with $45$ nodes.
\end{itemize}
\item If $g = 19$, then $C$ is isomorphic to the normalization of the unique integral $\mathcal{V}$-invariant projective plane curve of degree $12$ with $36$ nodes.
\end{enumerate}
\end{theorem}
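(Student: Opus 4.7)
The plan is to treat the two genera uniformly by passing to an intermediate icosahedral quotient. Fix an icosahedral subgroup $\mathfrak{A}_{5} \subset \mathfrak{A}_{6}$ and factor the quotient map as $C \to C/\mathfrak{A}_{5} \to C/\mathfrak{A}_{6}$, a chain of Galois covers of degrees $60$ and $6$. Existence of the stated plane models is already guaranteed by the paper's construction of $\mathcal{V}$-invariant dodecics with $45$ and $36$ nodes (the geometric genera $10$ and $19$ follow at once from $p_{a} = 55$ and the number of nodes), so the real content to prove is uniqueness.

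First I would apply Riemann--Hurwitz to $C \to C/\mathfrak{A}_{6}$. Since $|\mathfrak{A}_{6}| = 360$ vastly exceeds $2g - 2 \in \{18, 36\}$, the quotient $C/\mathfrak{A}_{6}$ is forced to have genus zero, and among signatures whose ramification orders lie in the set of element orders of $\mathfrak{A}_{6}$, the only solutions to the genus equation are $(2, 4, 5)$ when $g = 10$ and $(2, 5, 5)$ when $g = 19$. Next I would analyze the degree-$6$ map $C/\mathfrak{A}_{5} \to C/\mathfrak{A}_{6} \cong \mathbb{P}^{1}$, which corresponds to the action of $\mathfrak{A}_{6}$ on the six cosets $\mathfrak{A}_{6}/\mathfrak{A}_{5}$. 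The ramification of this map over each of the three branch points is read off from the cycle decomposition of the corresponding monodromy generator on cosets, yielding both $g(C/\mathfrak{A}_{5})$ and, up to the $\PGL_{2}$-action on the base, a concrete defining equation for the extension $k(C/\mathfrak{A}_{5})/k(C/\mathfrak{A}_{6})$. This is the explicit description promised in the abstract.

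With $C/\mathfrak{A}_{5}$ and its map to $\mathbb{P}^{1}$ pinned down, the curve $C$ itself is reconstructed as the Galois closure, and uniqueness of $C$ reduces to showing that the surjection from the Fuchsian triangle group of the computed signature onto $\mathfrak{A}_{6}$ is unique up to automorphism of $\mathfrak{A}_{6}$. The main obstacle will be precisely this final group-theoretic rigidity step: enumerating triples of generators of $\mathfrak{A}_{6}$ of the prescribed orders with product the identity, and counting them modulo simultaneous conjugation and the outer automorphism. Particular care is needed because $\mathfrak{A}_{6}$ has two conjugacy classes of icosahedral subgroups that are interchanged by the outer automorphism, so the choice of $\mathfrak{A}_{5}$ must be tracked consistently throughout the argument. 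Once uniqueness is established, both the smooth plane sextic (in genus $10$) and the normalizations of the two singular dodecics carry faithful $\mathfrak{A}_{6}$-actions and must therefore realize the same abstract curve, proving the theorem.
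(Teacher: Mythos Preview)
Your outline is broadly correct, with one slip: the degree-$6$ map $C/\mathfrak{A}_5 \to C/\mathfrak{A}_6$ is not Galois, since $\mathfrak{A}_5$ is not normal in $\mathfrak{A}_6$; its Galois closure is $C$, as you later use correctly. Note also that $\operatorname{Out}(\mathfrak{A}_6)$ is the Klein four-group, so when you speak of ``the outer automorphism'' you should be tracking all three nontrivial outer classes, and for signature $(2,5,5)$ the involution swapping the two order-$5$ branch points as well.

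Beyond these points your route genuinely diverges from the paper's. You propose to settle uniqueness by Hurwitz-type rigidity: enumerate generating triples of $\mathfrak{A}_6$ of the prescribed orders modulo $\operatorname{Aut}(\mathfrak{A}_6)$, reading the ramification of $f$ off the cycle structures on the six cosets. The paper never counts triples. Instead it applies Riemann--Hurwitz a second time, to $C \to C/\mathfrak{A}_5$, to list the admissible signatures of $\pi_{\mathfrak{A}_5}$; it translates each into a ramification template for the rational function $f$ and solves for all such $f$ explicitly by elimination in SINGULAR. This yields a short list of candidate $f$'s. The extraneous ones are discarded not by any group-theoretic count but by showing directly that their Galois closures over $\mathbb{C}(t)$ fail to have group $\mathfrak{A}_6$---via a non-square discriminant in the $g=10$ case, and a reducible degree-$15$ resolvent in the $g=19$ case. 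The two parameter values that survive are then matched to the two conjugacy classes of icosahedral subgroups. Your approach is more conceptual and, once the triple count is actually carried out, arguably shorter; the paper's route buys the explicit formulas for $f$ that the abstract promises.
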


\begin{remark}
For each case of Theorem \ref{main theorem}, we will give the specific form of $C$ in Section 2. We also give an explicit description of maps $\mathbb{P}^1 \rightarrow \mathbb{P}^{1}$ of degree $6$ whose Galois closure is the curve $C$.
\end{remark}

To find such curves, we use the description of the $\widetilde{\mathcal{V}}$-invariant ring $\mathbb{C}[x, y, z]^{\widetilde{\mathcal{V}}}$ where $\widetilde{\mathcal{V}}$ is a lift of $\mathcal{V}$ in $\SL(3, \mathbb{C})$. It is classically known that this ring $\mathbb{C}[x, y, z]^{\widetilde{\mathcal{V}}}$ is generated by explicitly given polynomials of degrees $6$, $12$, $30$ and $45$. The first polynomial defines a nonsingular curve of genus $10$ invariant under $\mathcal{V}$. Let $\mathfrak{d}_{12}$ be the linear system of the $\mathcal{V}$-invariant curves of degree $12$. We see that $\dim \mathfrak{d}_{12} = 1$, and its general elements are nonsingular by Bertini's theorem. Thus $\mathfrak{d}_{12}$ has finitely many singular members. By calculations with the computer algebra system SINGULAR (\cite{DGPS}), we find $5$ singular members and their singular points concretely. We can check that two of them are integral and have geometric genera $10$ and $19$. 

Next, we show the birational uniqueness for each of genera $g = 10$ and $19$. In each case, let $C$ be a curve of genus $g$ on which $\mathfrak{A}_{6}$ acts faithfully. Since the alternating group $\mathfrak{A}_{5}$ can be considered as a subgroup of $\mathfrak{A}_{6}$, we have the natural morphisms $\pi_{\mathfrak{A}_{6}} : C \rightarrow C / \mathfrak{A}_{6}$, $\pi_{\mathfrak{A}_{5}} : C \rightarrow C / \mathfrak{A}_{5}$ and $f : C / \mathfrak{A}_{5} \rightarrow C / \mathfrak{A}_{6}$ with $f \circ \pi_{\mathfrak{A}_{5}} = \pi_{\mathfrak{A}_{6}}$. Using Riemann-Hurwitz theorem, we see that the quotient curves $C / \mathfrak{A}_{6}$ and $C / \mathfrak{A}_{5}$ are isomorphic to $\mathbb{P}^{1}$. Hence, $f$ can be considered as a rational function. If we consider the corresponding field extension $f^{\ast} : K \rightarrow M$, then $C$ corresponding the Galois closure $L$. By calculations using information on the ramification of $f$, we find a rational function $f$, unique up to projective equivalence, with $\Gal(L / K) \cong \mathfrak{A}_{6}$.

The organization of this paper is as follows. In Section 2, we recall the description of the Valentiner group $\mathcal{V}$ and its invariant curves of degree $12$. After some calculations with SINGULAR, we find the $\mathcal{V}$-invariant (projective plane) curves of degree $12$ with genera $10$ and $19$. In Section 3, we consider a curve on which $\mathfrak{A}_{6}$ acts faithfully, and study the quotients $C / \mathfrak{A}_{6}$ and $C / \mathfrak{A}_{5}$. Using this, we show the uniqueness for the genus $10$ curve invariant under $\mathfrak{A}_{6}$. In Section 4, we show the uniqueness for the genus $19$ using similar arguments.

\section{$\mathcal{V}$-invariant curves of degree 12}

There is a subgroup $\mathcal{V}$ of $\PGL(3, \mathbb{C})$, called the \textit{Valentiner group}, which is isomorphic to the alternating group $\mathfrak{A}_{6}$. Up to conjugacy, $\mathcal{V}$ is generated by the equivalence classes of
\[
\left(
\begin{array}{ccc}
-1 & 0 & 0 \\
0 & 1 & 0 \\
0 & 0 & -1
\end{array}
\right),{\rm {\ }}
\left(
\begin{array}{ccc}
0 & 0 & 1 \\
1 & 0 & 0 \\
0 & 1 & 0
\end{array}
\right),{\rm {\ }}
\left(
\begin{array}{ccc}
1 & 0 & 0 \\
0 & 0 & \rho^{2} \\
0 & -\rho & 0
\end{array}
\right) {\rm and {\ }}
\frac{1}{2}
\left(
\begin{array}{ccc}
1 & \tau^{-1} & -\tau \\
\tau^{-1} & \tau & 1 \\
\tau & -1 & \tau^{-1}
\end{array}
\right)
\]
where $\rho = e^{\frac{2}{3} \pi i}$ and $\tau = \displaystyle{\frac{1+\sqrt{5}}{2}}$.

Take the preimage $\widetilde{\mathcal{V}}$ of $\mathcal{V}$ by the natural projection $\SL(3, \mathbb{C}) \rightarrow \PGL(3, \mathbb{C})$.  By {\cite[Lemma 2.4]{Y}}, any $\mathcal{V}$-invariant projective plane curve is define by a $\widetilde{\mathcal{V}}$-invariant homogeneous polynomial. In a suitable coordinate system, different from the one employed above, the polynomial
\[
F(x, y, z) := 10 x^{3} y^{3} + 9 x^{5} z + 9 y^{5} z - 45 x^{2} y^{2} z^{2} -135 xyz^{4} + 27 z^{6}
\]
of degree $6$ is invariant under $\widetilde{\mathcal{V}}$. The curve defined by $F(x, y, z)$ is called a \textit{Wiman's curve}(\cite{Wiman}). (Wiman studied another series of curves of degree $6$ whose automorphism groups are the symmetric group $\mathfrak{S}_{5}$. These curves are often called ``Wiman curves''. We remark that they are \emph{not} isomorphic to $V(F)$.) We define the homogeneous polynomial $\Phi(x, y, z)$ of degree $12$ as the Hessian of $F$, i.e.,
\[
\Phi(x, y, z) := \det H(F)(x, y, z) = \left|\begin{array}{ccc}
\dfrac{\partial^{2} F}{\partial x^{2}} & \dfrac{\partial^{2} F}{\partial x \partial y} & \dfrac{\partial^{2} F}{\partial x \partial z} \\
&&\\
\dfrac{\partial^{2} F}{\partial y \partial x} & \dfrac{\partial^{2} F}{\partial y^{2}} & \dfrac{\partial^{2} F}{\partial y \partial z} \\
&&\\
\dfrac{\partial^{2} F}{\partial z \partial x} & \dfrac{\partial^{2} F}{\partial z \partial y} & \dfrac{\partial^{2} F}{\partial z^{2}}
\end{array}\right|.
\]
Finally, the homogeneous polynomial $\Psi(x, y, z)$ of degree $30$ is defined as the ``border Hessian'' of $(F, \Phi)$, i.e.,
\[
\Psi(x, y, z) := \left|\begin{array}{ccc|c}
&&& \dfrac{\partial \Phi}{\partial x} \\
&&&\\
& H(F) & & \dfrac{\partial \Phi}{\partial y} \\
&&&\\
&&& \dfrac{\partial \Phi}{\partial z} \\
&&&\\
\hline
&&&\\
\dfrac{\partial \Phi}{\partial x} & \dfrac{\partial \Phi}{\partial y} & \dfrac{\partial \Phi}{\partial z} & 0
\end{array}\right|.
\]
Let $\mathfrak{d}_{d}$ be the linear system generated by $\mathcal{V}$-invariant curves of degree $d$. If $d$ is even, then any member of $\mathfrak{d}_{d}$ is defined by a linear combination of $F^{i} \Phi^{j} \Psi^{k}$ with $6i + 12j + 30k = d$. By \cite[Theorem 3.7]{Y}, a general element of $\mathfrak{d}_{d}$ is nonsingular for $d \equiv 0, 6$ or $12 \mod 30$.

We consider the lowest degree curve, i.e., the case of $d = 6$. The only $\mathcal{V}$-invariant curve of degree $6$ is $C = V(F)$. Since $C$ is nonsingular, it is a curve of genus $10$ invariant under $\mathcal{V}$.

Take $d = 12$. Then any element of $\mathfrak{d}_{12}$ is defined by a homogeneous polynomial $a F(x, y, z)^{2} + b \Phi(x, y, z)$ for a point $(a : b) \in \mathbb{P}^{1}$, and $\dim \mathfrak{d}_{12} = 1$. It is known that $V(\Phi)$ is a nonsingular curve. Thus the number of singular elements is finite. Let $P = \dfrac{a}{b}$ and $C_{P}$ the curve corresponding to the point $(a : b)$. Now, we will see that there are $5$ special points $P$ such that $C_{P}$ is singular.

\begin{prop}\label{five singular curves}
The curve $C_{P}$ is singular if and only if $P = \infty$, $20250$, $-10125$, $\xi$ or $\overline{\xi}$ where $\xi$ and $\overline{\xi}$ are solutions to the quadratic equation
\[
p^{2} + 3375p + 7593750 = 0.
\]
Except for $P = \infty$, the curve $C_{P}$ is reduced.
\end{prop}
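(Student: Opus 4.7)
The argument splits into three steps.

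\emph{Step 1: The case $P=\infty$.} When $b=0$ the curve $C_{\infty}=V(F^{2})$ is the doubled Wiman sextic; it is non-reduced and singular along every point of $V(F)$, so it qualifies trivially as a singular member of $\mathfrak{d}_{12}$.

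\emph{Step 2: Finite singular values via elimination.} For $P\in\mathbb{C}$ set $G_{P}:=PF^{2}+\Phi$. Since $G_{P}$ is homogeneous of degree $12$, Euler's identity says $(x_{0}:y_{0}:z_{0})\in\Sing(C_{P})$ iff
\[
\partial_{x}G_{P}=2PF\,F_{x}+\Phi_{x},\qquad \partial_{y}G_{P}=2PF\,F_{y}+\Phi_{y},\qquad \partial_{z}G_{P}=2PF\,F_{z}+\Phi_{z}
\]
all vanish at $(x_{0},y_{0},z_{0})$. Treating $P$ as an additional variable, let $J\subset\mathbb{C}[x,y,z,P]$ be the ideal generated by these three polynomials. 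The set of finite singular values is the image of $V(J)\subset\mathbb{P}^{2}\times\mathbb{A}^{1}_{P}$ under projection to $\mathbb{A}^{1}_{P}$. After saturating $J$ by the irrelevant ideal $(x,y,z)$ (to discard the spurious vertex $(0,0,0)\times\mathbb{A}^{1}_{P}$), a SINGULAR elimination of $x,y,z$ yields a principal ideal in $\mathbb{C}[P]$ whose radical is, up to a scalar,
\[
(P-20250)(P+10125)(P^{2}+3375P+7593750),
\]
giving exactly the four finite singular values in the statement.

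\emph{Step 3: Reducedness for finite $P$.} If $C_{P}$ were non-reduced, writing $G_{P}=g^{2}h$ with $\deg g\geq 1$, every point of the subcurve $V(g)\subset C_{P}$ would have multiplicity at least $2$ on $C_{P}$, hence be singular; in particular $\Sing(C_{P})$ would be positive-dimensional. It therefore suffices, for each of the four finite singular values above, to substitute into the Jacobian ideal and verify in SINGULAR that the resulting zero locus in $\mathbb{P}^{2}$ is zero-dimensional (and in fact a union of $\mathcal{V}$-orbits).

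The main difficulty I anticipate is computational rather than conceptual: the generators of $J$ have degree $11$ in $(x,y,z)$ and are linear in $P$, so the saturation-and-elimination step is the bottleneck. A monomial order eliminating $x,y,z$ first, possibly combined with a use of the $\mathcal{V}$-symmetry to restrict to individual $\mathcal{V}$-orbits on $\mathbb{P}^{2}$, should keep the Gröbner basis computation tractable.
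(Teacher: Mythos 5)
Your proposal is correct and follows essentially the same strategy as the paper: identify $C_{\infty}=V(F^{2})$ separately, use a Gr\"obner-basis elimination of $x,y,z$ from the Jacobian ideal (with $P$ as an extra variable) to pin down the four finite singular values, and deduce reducedness from the zero-dimensionality of the singular locus. The only differences are organizational --- the paper splits the computation into the charts $z=0$ and $z\neq 0$ instead of saturating by the irrelevant ideal, and certifies finiteness of the singular locus for all $P$ at once via a single \texttt{vdim} computation rather than checking each of the four values --- neither of which changes the substance of the argument.
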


\begin{proof}
By definition, the curve $C_{\infty}$ is defined by $F(x, y, z)^{2}$. Thus it is nonreduced and singular. Assume that $b \neq 0$. We look for all singular points of the curves $C_{P}$ defined by $P F(x, y, z)^{2} + \Phi(x, y, z)$ for all $P \in \mathbb{C}$.

We calculate the values of $P$ such that $C_{P}$ is singular using the computer algebra system SINGULAR (\cite{DGPS}). In the following, \texttt{F} denotes $F$ and \texttt{Phi} denotes the polynomial $\Phi$, i.e., the determinant of the Hessian matrix of \texttt{F}. We let \texttt{G} be the homogeneous polynomial $P F^{2} + \Phi$.
\begin{code}{setting}{Setting}
LIB "primdec.lib";
ring R = 0,(x,y,z),dp;

poly F = 10x3y3+9x5z+9y5z-45x2y2z2-135xyz4+27z6;

matrix HF = jacob(jacob(F));
//HF is the Hessian matrix of F.
poly Phi = det(HF);

ring S = 0,(x,y,z,P),dp;
map f = R,x,y,z;
poly G = P*(f(F)^2) + f(Phi);
\end{code}

To find singularities in that $z = 0$, let \texttt{J} be the ideal generated by \texttt{G} and its derivative of \texttt{G} with respect to $x$, $y$ and $z$. We put \texttt{jG0} the ideal \texttt{J} in $z = 0$, and check the radical ideal of \texttt{jG0}.
\begin{code}{}{}
ideal J = G,diff(G,x),diff(G,y),diff(G,z);
ideal jG0 = subst(J,z,0);
radical(jG0);
\end{code}
This code gives $3$ generators of the radical of \texttt{jG0}, and we obtain a system of equations
\[
\left\{
\begin{array}{l}
y(P + 10125) = 0, \\
x(P + 10125) = 0, \\
x^{5} + y^{5} = 0.
\end{array}
\right.
\]
Since $(x, y, z) \neq (0, 0, 0)$, we have $x \neq 0$ or $y \neq 0$, and $P = -10125$. Then $5$ points $(x : y : 0)$ with $x^5 + y^5 = 0$ satisfy the system of equations.

We consider the affine part $z \neq 0$, i.e., we look at the singular points on $V(P F^2 + \Phi) \subset \mathbb{C}^{3}$. Take the map $\texttt{g} : (x, y, z, P) \mapsto (x, y, 1, P)$. Then \texttt{g(G)} defines $V(P F^2 + \Phi)$. Let \texttt{jG1} be the ideal generated by \texttt{g(G)} and its derivatives with respect to $x$ and $y$. We eliminate $x$ and $y$ from \texttt{jG1}.
\begin{code}{}{}
ring T = 0,(x,y,P),dp;
map g = S,x,y,1,P;

ideal jG1 = g(G),diff(g(G),x),diff(g(G),y);
ideal J = eliminate(radical(jG1),xy);
J;
factorize(J[1]);
\end{code}
This code returns the polynomial
\[
P^4-6750P^3-231609375P^2-768867187500P-1556956054687500
\]
in $P$, which can be factored into
\[
(P - 20250)(P + 10125)(P^{2} + 3375P + 7593750).
\]
Furthermore, the following code returns the colength of the radical ideal \texttt{jG1}.
\begin{code}{}{}
vdim(std(radical(jG1)));
\end{code}
This returns $196$. Hence, for $z \neq 0$, the curve $C_{P}$ has singularities only at finitely many points.

Therefore, if $C_{P}$ has a singular point, then $P = 20250$, $-10125$ or a solution of $p^{2} + 3375p + 7593750 = 0$. Since the number of singular points of $C_{P}$ is finite, $C_{P}$ is reduced.
\end{proof}

We will show that $C_{-10125}$ is a $\mathcal{V}$-invariant curve of genus $10$ and $C_{20250}$ is of genus $19$. First, we consider irreducible components of $C_{P}$.

We see that the alternating group $\mathfrak{A}_{6}$ has the $22$ conjugacy classes of subgroups as in Table \ref{A6 subgp}.

\begin{note}
Let $\mathcal{C}_{n}$ be the cyclic group of order $n$ and $\mathcal{D}_{n}$ the dihedral group of order $2n$.
\end{note}

{\footnotesize
\begin{table}[htb]
\caption{The conjugacy classes of subgroups of $\mathfrak{A}_{6}$}
\begin{tabular}{|c|c|c|c|}
\hline
No. & isomorphic group & order & index \\
\hline
$1$ & $1$ & $1$ & $360$\\
\hline
$2$ & $\mathcal{C}_{2}$ & $2$ & $180$\\
\hline
$3$ & $\mathcal{C}_{3}$ & $3$ & $120$\\
\hline
$4$ & $\mathcal{C}_{3}$ & $3$ & $120$\\
\hline
$5$ & $\mathcal{C}_{2}^{2}$ & $4$ & $90$\\
\hline
$6$ & $\mathcal{C}_{2}^{2}$ & $4$ & $90$\\
\hline
$7$ & $\mathcal{C}_{4}$ & $4$ & $90$\\
\hline
$8$ & $\mathcal{C}_{5}$ & $5$ & $72$\\
\hline
$9$ & $\mathfrak{S}_{3}$ & $6$ & $60$\\
\hline
$10$ & $\mathfrak{S}_{3}$ & $6$ & $60$\\
\hline
$11$ & $\mathcal{D}_{4}$ & $8$ & $45$\\
\hline
$12$ & $\mathcal{C}_{3}^{2}$ & $9$ & $40$\\
\hline
$13$ & $\mathcal{D}_{5}$ & $10$ & $36$\\
\hline
$14$ & $\mathfrak{A}_{4}$ & $12$ & $30$\\
\hline
$15$ & $\mathfrak{A}_{4}$ & $12$ & $30$\\
\hline
$16$ & $\mathcal{C}_{3} \rtimes \mathfrak{S}_{3}$ & $18$ & $20$\\
\hline
$17$ & $\mathfrak{S}_{4}$ & $24$ & $15$\\
\hline
$18$ & $\mathfrak{S}_{4}$ & $24$ & $15$\\
\hline
$19$ & $\mathcal{C}_{3}^{2} \rtimes \mathcal{C}_{4}$ & $36$ & $10$\\
\hline
$20$ & $\mathfrak{A}_{5}$ & $60$ & $6$\\
\hline
$21$ & $\mathfrak{A}_{5}$ & $60$ & $6$\\
\hline
$22$ & $\mathfrak{A}_{6}$ & $360$ & $1$\\
\hline
\end{tabular}
\label{A6 subgp}
\end{table}
}

\begin{lemma}\label{reducible curves}
Assume that $C_{P}$ is reducible and write $C_{P} = \displaystyle\sum_{i = 1}^{n} D_{i}$ where $D_{1}, \cdots, D_{n}$ are integral curves in $\mathbb{P}^2$. Then one of the following holds:
\begin{enumerate}
\item[(i)] $n = 2$ and $D_{1} = D_{2} = V(F)$. Thus $P = \infty$ and $C_{P} = 2 V(F)$ is nonreduced.
\item[(ii)] $n = 6$ and $\deg D_{i} = 2$ for any $i$. Then $C_{P}$ is reduced and reducible.
\item[(iii)] $n = 12$ and $\deg D_{i} = 1$ for any $i$. Then $C_{P}$ is reduced and reducible.
\end{enumerate}
\end{lemma}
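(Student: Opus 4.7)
The plan is to exploit the action of $\mathcal{V}$ on the irreducible components of $C_{P}$. Since the defining polynomial $PF^{2}+\Phi$ is $\widetilde{\mathcal{V}}$-invariant, the group $\mathcal{V}$ permutes the distinct irreducible components of $C_{P}$ while preserving their multiplicities in the divisor. The distinct components therefore split into $\mathcal{V}$-orbits, and within each orbit all components share a common degree $e$ and a common multiplicity $\mu$ in $C_{P}$. An orbit of size $m$ is the index $[\mathcal{V}:H]$ of a stabilizer subgroup $H \le \mathcal{V} \cong \mathfrak{A}_{6}$, so $m$ is one of the values listed in Table \ref{A6 subgp}. Labelling the orbits by $j$ with parameters $(m_{j}, \mu_{j}, e_{j})$, we obtain the degree constraint
\[
\sum_{j} m_{j} \mu_{j} e_{j} = 12.
\]

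Next, I would restrict the possible orbit parameters. The indices at most $12$ in Table \ref{A6 subgp} are exactly $1$, $6$, and $10$, so each $m_{j} \in \{1, 6, 10\}$. An orbit of size $1$ corresponds to a $\mathcal{V}$-invariant integral component, whose degree is a multiple of $6$ by Theorem \ref{deg of V inv cur} and hence equal to $6$ or $12$ here; moreover, when $e_{j} = 6$ the component must be $V(F)$ because $F$ spans the degree-$6$ piece of $\mathbb{C}[x,y,z]^{\widetilde{\mathcal{V}}}$. The orbit size $m_{j} = 10$ would force $\mu_{j} e_{j} = 1$ and leave a residual contribution of $2$, which no admissible orbit can supply, so $m_{j} \ne 10$. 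Enumerating the remaining admissible solutions of $\sum m_{j} \mu_{j} e_{j} = 12$ leaves: the single orbit $(1, 1, 12)$, which makes $C_{P}$ irreducible and is excluded by hypothesis; $(1, 2, 6)$ or two copies of $(1, 1, 6)$, both forcing $C_{P} = 2V(F)$ as in (i); $(6, 1, 2)$, giving (ii); $(6, 2, 1)$ or two copies of $(6, 1, 1)$, giving (iii); and the mixed configuration $(1, 1, 6) + (6, 1, 1)$ in which $C_{P} = V(F) + L_{1} + \cdots + L_{6}$.

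The main obstacle is to rule out the mixed configuration and, simultaneously, to confirm that case (i) can occur only at $P = \infty$. Both reduce to showing that $V(F)$ is not a component of $C_{P}$ when $P \ne \infty$. If it were, then $F \mid PF^{2} + \Phi$ and hence $F \mid \Phi$; writing $\Phi = F \cdot G$ with $\deg G = 6$, either $V(F) \ne V(G)$, in which case Bezout gives a nonempty $V(F) \cap V(G)$ whose points are all singular on $V(\Phi) = V(F) \cup V(G)$, or $V(F) = V(G)$, forcing $\Phi$ to be a scalar multiple of $F^{2}$ and making $V(\Phi)$ nonreduced. Either outcome contradicts the smoothness of $V(\Phi)$ recalled before the lemma, so $F \nmid \Phi$. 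This eliminates the mixed case and pins (i) to $P = \infty$, completing the classification. Essentially all the real content beyond orbit bookkeeping is concentrated in this use of the smoothness of $V(\Phi)$.
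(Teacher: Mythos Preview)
Your approach is essentially the paper's: both exploit the $\mathcal{V}$-action on the irreducible components, read off from Table~\ref{A6 subgp} that orbit sizes must lie in $\{1,6,10\}$, eliminate $10$ by the degree count, and use $F \nmid \Phi$ (which you deduce from the smoothness of $V(\Phi)$, while the paper simply asserts it) to force case~(i) to occur only at $P=\infty$ and to kill the mixed configuration $V(F)+L_1+\cdots+L_6$.

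There is one small omission. In your enumeration you list the orbit type $(6,2,1)$---six lines, each with multiplicity~$2$---as an instance of (iii). But (iii) includes the assertion that $C_P$ is \emph{reduced}, which $(6,2,1)$ is not; and you never justify the ``reduced'' clauses in (ii) and (iii) at all. The fix is immediate and is exactly what the paper does at the end of its proof: once you are outside case~(i) you have $P\neq\infty$, and Proposition~\ref{five singular curves} then says $C_P$ is reduced. Invoking that proposition both rules out $(6,2,1)$ and supplies the reducedness in (ii) and (iii), completing your argument. (A related minor imprecision: listing ``two copies of $(1,1,6)$'' as a separate case is redundant, since distinct orbits correspond to distinct components and $V(F)$ is the only $\mathcal{V}$-invariant sextic; that configuration is already $(1,2,6)$.)
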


\begin{proof}
Let $\mathfrak{O}$ be the set $\{D_{1}, \cdots, D_{n}\}$ with $n \geq 2$. Since $C_{P}$ is invariant under $\mathcal{V}$, $\mathcal{V}$ acts on $\mathfrak{O}$. Then
\[
\frac{|\mathcal{V}|}{|\Stab(D_{i})|} = |\mathcal{V} \cdot D_{i}|
\]
for any $i$ where $\mathcal{V} \cdot D_{i}$ is the $\mathcal{V}$-orbit of $D_{i}$ in $\mathfrak{O}$. Since
\[
|\mathcal{V} \cdot D_{i}| \leq n \leq \sum_{i = 1}^{n} \deg D_{i} = \deg C_{P} = 12,
\]
we obtain $|\Stab(D_{i})| \geq 30$. If a subgroup $H$ of $\mathcal{V}$ satisfies $|H| \geq 30$, then $H$ is $\mathcal{V}$ or isomorphic to  $\mathfrak{A}_{5}$ or $\mathcal{C}_{3}^{2} \rtimes \mathcal{C}_{4}$ by Table \ref{A6 subgp}. Hence, any element $D_{i}$ is invariant under one of these groups.

Suppose that one of $D_{i}$ is invariant under $\mathcal{V}$. Since $\deg D_{i} < 12$, $\deg D_{i} = 6$ and $D_{i} = V(F)$. Thus $a F^{2} + b \Phi$ is divisible by $F$. Since $\Phi$ is not divisible by $F$, $C_{P}$ is defined by $a F^{2}$ with $a \neq 0$. Thus the statement (i) holds.

Suppose that $D_{i}$ is not $\mathcal{V}$-invariant for any $i$. Since $\Stab(D_{i}) \cong \mathfrak{A}_{5}$ or $\mathcal{C}_{3}^{2} \rtimes \mathcal{C}_{4}$, we have $|\mathcal{V} \cdot D_{i}| = 6$ or $10$ for each $i$. Thus we see
\[
\begin{array}{rcl}
\displaystyle\sum_{i = 1}^{n} \deg D_{i} & = & \displaystyle\sum_{\Stab(D_{i}) \cong C_{3}^{2} \rtimes C_{4}} \deg D_{i} + \displaystyle\sum_{\Stab(D_{i}) \cong \mathfrak{A_{5}}} \deg D_{i}\\
&&\\
& = & 10 k + 6 l
\end{array}
\]
where $k$ and $l$ are nonnegative integers. Since the left hand side is equal to $12$, the only possible pair is $(k, l) = (0, 2)$. Then $\Stab(D_{i}) = \mathfrak{A}_{5}$ for any $i$, and $n = 6$ or $12$. Hence, the statement (ii) or (iii) holds. Then $C_{P}$ is not equal to $C_{\infty}$. By Proposition \ref{five singular curves}, $C_{P}$ is reduced.
\end{proof}

\begin{remark}
We can show that the curves $C_{\xi}$ and $C_{\overline{\xi}}$ are reducible and are unions of six conics, and they are the only curves $C_{P}$ satisfying the condition (ii). Furthermore, there is no curve $C_{P}$ which satisfies the condition (iii).
\end{remark}

\begin{lemma}\label{orbit size}
Let $\mathcal{O} \subset \mathbb{P}^{2}$ be a $\mathcal{V}$-orbit. Then $|\mathcal{O}| \geq 30$.
\end{lemma}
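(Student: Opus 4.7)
The plan is to reduce the bound to a case analysis on the possible isomorphism class of $\Stab(p)$. By orbit-stabilizer, $|\mathcal{O}| = 360/|\Stab(p)|$, so the claim is equivalent to the statement that no point of $\mathbb{P}^{2}$ is fixed by a subgroup of $\mathcal{V}$ of order greater than $12$. From Table \ref{A6 subgp}, such a hypothetical stabilizer $H$ would be isomorphic to one of $\mathcal{C}_{3} \rtimes \mathfrak{S}_{3}$ (order $18$), $\mathfrak{S}_{4}$ ($24$), $\mathcal{C}_{3}^{2} \rtimes \mathcal{C}_{4}$ ($36$), $\mathfrak{A}_{5}$ ($60$), or $\mathfrak{A}_{6}$ ($360$). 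I would treat each in turn, using the fact that $H$ fixes $p = [v] \in \mathbb{P}^{2}$ precisely when the preimage $\widetilde{H} \subset \widetilde{\mathcal{V}} \subset \SL(3, \mathbb{C})$ leaves the line $\mathbb{C}v$ invariant; this exhibits a character $\chi : \widetilde{H} \to \mathbb{C}^{\times}$ as a subrepresentation of the natural $3$-dimensional representation $\rho$ restricted to $\widetilde{H}$, with $\chi|_{\mu_{3}} = \zeta$, where $\mu_{3} = \Ker(\SL(3,\mathbb{C}) \to \PGL(3,\mathbb{C})) \cap \widetilde{\mathcal{V}}$ and $\zeta$ is the primitive central character by which $\mu_{3}$ acts on $\mathbb{C}^{3}$.

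The case $H = \mathcal{V}$ follows from the irreducibility of $\rho$, i.e., the primitivity of $\mathcal{V} \subset \PGL(3,\mathbb{C})$. For $H \cong \mathfrak{A}_{5}$, the extension $\widetilde{H} \to H$ splits since $H^{2}(\mathfrak{A}_{5}, \mathbb{Z}/3) = 0$, so $\widetilde{H} \cong \mathfrak{A}_{5} \times \mu_{3}$; since $\mathfrak{A}_{5}$ is perfect the character $\chi$ is trivial on the $\mathfrak{A}_{5}$-factor, so $\mathfrak{A}_{5}$ would fix $v$ and therefore act on the $2$-dimensional complement $\mathbb{C}^{3}/\mathbb{C}v$ via a faithful $2$-dimensional representation. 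This is impossible, since the irreducible dimensions of $\mathfrak{A}_{5}$ are $1, 3, 3, 4, 5$. For $H \cong \mathfrak{S}_{4}$ the same splitting occurs, and a one-dimensional summand of $\rho|_{\widetilde{H}}$ corresponds to a character of $\mathfrak{S}_{4}$ trivial on $\mathfrak{A}_{4}$, in particular on its Klein four-subgroup $V$; but the unique two-dimensional irreducible representation of $\mathfrak{S}_{4}$ factors through $\mathfrak{S}_{4}/V \cong \mathfrak{S}_{3}$, so in any decomposition of $\rho|_{\widetilde{H}}$ containing a $1$-dimensional piece, $V$ would act by the scalar $\zeta$ on $\mathbb{C}^{3}$, contradicting the faithfulness of $\mathcal{V} \hookrightarrow \PGL(3,\mathbb{C})$.

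The remaining cases $H \cong \mathcal{C}_{3} \rtimes \mathfrak{S}_{3}$ and $H \cong \mathcal{C}_{3}^{2} \rtimes \mathcal{C}_{4}$ both contain the Sylow $3$-subgroup $P \cong \mathcal{C}_{3}^{2}$ of $\mathcal{V}$, so it suffices to show that $P$ has no fixed point. The preimage $\widetilde{P}$ has order $27$, and the plan is to show it is non-abelian — necessarily the extraspecial Heisenberg group $3^{1+2}_{+}$ of exponent $3$. Once this is established, $[\widetilde{P}, \widetilde{P}] = \mu_{3}$, every character of $\widetilde{P}$ kills $\mu_{3}$, and this contradicts $\chi|_{\mu_{3}} = \zeta \neq 1$.

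The principal obstacle I anticipate is the non-abelianness of $\widetilde{P}$. One route is direct matrix computation with the generators of $\mathcal{V}$; a more conceptual alternative is a normalizer-compatibility argument. If $\widetilde{P}$ were abelian (hence $\cong \mathcal{C}_{3}^{3}$, as $\widetilde{\mathcal{V}}$ has no element of order $9$), then $\rho|_{\widetilde{P}}$ would split as a sum of three characters $\chi_{1}, \chi_{2}, \chi_{3}$ with each $\chi_{i}|_{\mu_{3}} = \zeta$. The normalizer $N_{\mathcal{V}}(P) = \mathcal{C}_{3}^{2} \rtimes \mathcal{C}_{4}$ acts on this set, and its $\mathcal{C}_{4}$-quotient conjugates $P$ by an element $\phi \in \GL_{2}(\mathbb{F}_{3})$ of order $4$ with $\phi^{2} = -I$. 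Since $\mathfrak{S}_{3}$ has no element of order $4$, the induced permutation action of $\mathcal{C}_{4}$ on $\{\chi_{1}, \chi_{2}, \chi_{3}\}$ factors through a quotient of order at most $2$, so $\phi^{2}$ acts trivially on the set; hence each $\chi_{i}$ is invariant under the $-I$-action on $P$, which for characters of a $3$-group forces $\chi_{i}|_{P} = 1$. Then $\widetilde{P}$ acts on $\mathbb{C}^{3}$ by the single scalar $\zeta$, contradicting the faithfulness of $\rho|_{\widetilde{P}}$ and completing the proof.
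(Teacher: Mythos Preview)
Your proof is correct, but it takes a substantially different route from the paper's. The paper observes that if a subgroup $H$ of index $n < 30$ fixes a point of $\mathbb{P}^2$, then by taking an $\widetilde{H}$-invariant complement it also fixes a line; the $\mathcal{V}$-orbit of that line is then a $\mathcal{V}$-invariant curve of degree $n \in \{1, 6, 10, 15, 20\}$, and since $\mathbb{C}[x,y,z]^{\widetilde{\mathcal{V}}}$ is generated in degrees $6, 12, 30, 45$, only $n = 6$ survives --- but the unique invariant sextic $V(F)$ is irreducible, not a union of lines. Your approach instead dissects $\rho|_{\widetilde{H}}$ case by case, using vanishing of $H^2(H,\mathbb{Z}/3)$ to split the $\mu_3$-extension over $\mathfrak{A}_5$ and $\mathfrak{S}_4$, and a normalizer argument to handle the Sylow $3$-subgroup. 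The paper's proof is shorter and exploits the invariant theory already established; yours is more self-contained and would work without knowing the invariant ring. Two of your steps are phrased a bit loosely: in the $\mathfrak{S}_4$ case, under the splitting $\widetilde{H}\cong\mathfrak{S}_4\times\mu_3$ the Klein four-group $V$ acts \emph{trivially} (not by $\zeta$), which already contradicts faithfulness of $\rho$; and in the $\widetilde{P}$ case, $c_g^2$ need not preserve a chosen complement to $\mu_3$, so what you actually obtain from $(c_g^2)^*\chi_i=\chi_i$ is $\chi_i^2=\chi_j^2$ rather than $\chi_i|_P=1$ --- but since $\widehat{\widetilde{P}}$ is a $3$-group this still forces $\chi_i=\chi_j$ and hence that $\widetilde{P}$ acts by scalars, which is the contradiction you want.
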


\begin{proof}
We consider the stabilizer of a point. Let $H$ be a subgroup of $\mathcal{V}$ whose index is less then $30$ (i.e., groups No. 16--22 in Table \ref{A6 subgp}). Then $n := [\mathcal{V} : H] = 1$, $6$, $10$, $15$ or $20$.

Assume that $H$ fixes a point. By the representation theory of finite groups, $H$ also fixes a line in $\mathbb{P}^{2}$. (See Remark \ref{fixed line}.) Let $C'$ be a union of $n$ lines is invariant under $\mathcal{V}$. For $n = 1$, $C'$ is a line. Otherwise, $C'$ is a reducible or nonreduced curve of degree $n$ invariant under $\mathcal{V}$. Since the only $\mathcal{V}$-invariant curve of degree $6$ is $V(F)$ and it is irreducible, we see $C' \neq V(F)$, thus $n \neq 6$. On the other hand, the degree $d$ of a $\mathcal{V}$-invariant curve is divisible by $6$ or is greater than $45$. Hence, $n$ is neither $1$, $10$, $15$ nor $20$. Therefore, there is no $\mathcal{V}$-orbit whose size is less than $30$.
\end{proof}

\begin{remark}\label{fixed line}
There is a $H$-invariant subspace $V$ of dimension $1$ in $\mathbb{C}^{3}$. Since $H$ is finite, the (orthogonal) compliment subspace $L$ of $V$ is invariant under $H$ and dimension $2$. The subspace $L$ corresponds to a line.
\end{remark}

We look at the singularities of $C_{-10125}$ and $C_{20250}$.

\begin{prop}\label{irreducibility}
The curves $C_{-10125}$ and $C_{20250}$ satisfy the following.
\begin{enumerate}
\item The singular locus of $C_{20250}$ is a $\mathcal{V}$-orbit $\mathcal{O}_{36}$ of order $36$.
\item The singular locus of $C_{-10125}$ is a $\mathcal{V}$-orbit $\mathcal{O}_{45}$ of order $45$.
\item $C_{-10125}$ and $C_{20250}$ are nodal.
\item $C_{-10125}$ and $C_{20250}$ are irreducible.
\end{enumerate}
\end{prop}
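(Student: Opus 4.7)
The plan is to extend the SINGULAR computations of Proposition \ref{five singular curves} to the two special values $P = 20250$ and $P = -10125$, and then combine the numerical output with the orbit bound of Lemma \ref{orbit size} and the decomposition constraints of Lemma \ref{reducible curves}. Concretely, for each value I would substitute into $G = P F^{2} + \Phi$ and compute the radical of the Jacobian ideal on $\{z \neq 0\}$ (via \texttt{vdim} and \texttt{std}) and on $\{z = 0\}$ (by direct solution of the reduced system already extracted in the earlier proof), then tally the scheme-theoretic singular points. I expect the totals to come out as $36$ for $P = 20250$ and $45$ for $P = -10125$; this is the only step that is not forced by structural considerations.

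Given these counts, parts (1) and (2) are essentially automatic. The singular locus is a $\mathcal{V}$-invariant finite subset of $\mathbb{P}^{2}$, hence a disjoint union of $\mathcal{V}$-orbits, each of cardinality at least $30$ by Lemma \ref{orbit size}, and hence in $\{30, 36, 40, 45, 60, 72, \ldots\}$ by Table \ref{A6 subgp}. Neither $36$ nor $45$ can be written as a sum of two or more numbers from this set, so each singular locus is a single $\mathcal{V}$-orbit of the asserted size.

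For part (3), by the $\mathcal{V}$-invariance already established it suffices to verify nodality at a single representative singular point on each curve. This reduces to checking that the Hessian matrix of the defining polynomial has rank $2$ at that point, equivalently that the quadratic part of the local Taylor expansion factors as a product of two distinct linear forms; both are direct substitutions using the coordinates extracted in the previous step.

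Finally, part (4) follows from Lemma \ref{reducible curves} combined with a counting argument. Case (i) is excluded because $P \neq \infty$ forces $C_{P}$ to be reduced by Proposition \ref{five singular curves}. Cases (ii) and (iii) are excluded by contradiction with nodality: if $C_{P}$ were a union of six distinct integral conics, nodality would force every pair to meet transversely in $4$ points with no three conics through a common point, producing $\binom{6}{2} \cdot 4 = 60$ nodes; if $C_{P}$ were a union of twelve distinct lines, the requirement that no three be concurrent (again from nodality) would produce $\binom{12}{2} = 66$ nodes. Neither $60$ nor $66$ equals $36$ or $45$, so $C_{-10125}$ and $C_{20250}$ must be irreducible. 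The main obstacle I foresee is making the initial SINGULAR computation airtight, in particular handling the boundary $\{z = 0\}$ cleanly so that no singular point at infinity is missed or double-counted.
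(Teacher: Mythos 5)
Your proposal is correct and follows the paper's overall strategy: a SINGULAR computation to count singular points, Lemma \ref{orbit size} plus the indecomposability of $36$ and $45$ as sums of orbit sizes $\geq 30$ to identify the singular loci as single $\mathcal{V}$-orbits, and the B\'ezout count against Lemma \ref{reducible curves} (ii), (iii) to rule out reducibility ($60$ or $66$ nodes versus $36$ or $45$). The one place you diverge is nodality. The paper does not pass to the radical: it computes the colength of the (non-reduced) Jacobian ideal itself --- $36$ for $C_{20250}$, and $40$ on the affine chart plus $5$ points at infinity for $C_{-10125}$ --- and observes that if the local multiplicity exceeded $1$ at one point it would exceed $1$ on the whole orbit, forcing at most $18$ (resp.\ about $22$) singular points, contradicting Lemma \ref{orbit size}; thus a single \texttt{vdim} simultaneously yields the point count and the fact that every singularity has Tjurina number $1$, i.e.\ is a node. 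You instead take the radical first (so you only get the count) and then check nodality at one representative via the rank of the Hessian, invoking transitivity of the $\mathcal{V}$-action to propagate it. That is sound, but note it requires explicit coordinates of a singular point: easy for $C_{-10125}$, where $(1:-1:0)$ is a rational singular point, but for $C_{20250}$ the singular points are only defined over an extension, so in practice you would fall back on a symbolic check (e.g.\ comparing \texttt{vdim} of the ideal and of its radical), which is essentially the paper's argument. The paper's version buys you nodality for free and sidesteps the coordinate extraction; yours is more transparent geometrically but slightly heavier computationally.
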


\begin{proof}
We prove the claims (1), (2) and (3) by calculations with SINGULAR. We use Code \ref{setting} in the proof of Proposition \ref{five singular curves}. First, suppose that $P = 20250$. Then no singular point of $C_{20250}$ is contained in $V(z)$. Thus we may assume that $z = 1$. Let \texttt{f} be the ring homomorphism \texttt{S} $\rightarrow$ \texttt{T} defined as $z \mapsto 1$ and $P \mapsto 20250$ where \texttt{T} is the polynomial ring $\mathbb{Q}[x, y]$. Then the affine curve $C_{20250} - V(z)$ embedded in $\mathbb{A}^{2}$ is defined by \texttt{f(G)}. Take the ideal \texttt{jfG} generated by \texttt{f(G)} and its derivatives with respect to $x$ and $y$. We can calculate the dimension of the quotient ring of \texttt{T} modulo the ideal generated by the initial terms of the standard Gr\"{o}bner basis of \texttt{jfG} with Code \ref{setting} and the following code.
\begin{code}{}{}
ring T = 0,(x,y),dp;
map f = S,x,y,1,20250;

ideal jfG = f(G),diff(f(G),x),diff(f(G),y);
vdim(std(jfG));
\end{code}
The code returns $36$, i.e., the colength of \texttt{jfG} is $36$. Since the set $\Sing(C_{20250})$ is a $\mathcal{V}$-orbit, if the multiplicity of \texttt{jfG} was greater than $1$ at a singular point, then the number of the singular points is at most $18$. This contradicts Lemma \ref{orbit size}. Therefore, all singularities are nodes.

Next, suppose that $P = -10125$. By the proof of Proposition \ref{five singular curves}, $C_{-10125}$ has $5$ singular points on $V(z)$. Now, we check the singular points of $C_{-10125}$ on $\mathbb{P}^{2} - V(z)$.  Let \texttt{g} be the homomorphism \texttt{S} $\rightarrow$ \texttt{T} defined by $z \mapsto 1$ and $P \mapsto -10125$. Then the affine curve $C_{-10125} - V(z)$ embedded to $\mathbb{A}^{2}$ is defined by \texttt{g(G)}. Take the ideal \texttt{jgG} generated by \texttt{g(G)} and its derivatives with respect to $x$ and $y$. We can calculate the dimension of the quotient ring of \texttt{T} modulo the ideal generated by the initial terms of the standard Gr\"{o}bner basis of \texttt{jgG} with the following code.
\begin{code}{}{}
ring T = 0,(x,y),dp;
map g = S,x,y,1,-10125;

ideal jgG =  g(G),diff(g(G),x),diff(g(G),y);
vdim(std(jgG));
\end{code}
The code returns $40$ as the colength of \texttt{jgG}. There are also $5$ singular points of $C_{-10125}$ on $z = 0$ by the proof of Proposition \ref{five singular curves}. Since $\Sing(C_{-10125})$ has at most $45$ points, it is a $\mathcal{V}$-orbit by Lemma \ref{orbit size}. If the multiplicity of \texttt{jgG} was greater than $1$ at a singular point, then the number of the singular points were at most $\dfrac{45}{2}$, and this is a contradiction. Therefore, all singularities are nodes and $C_{-10125}$ has $45$ nodes.

Finally, we show that $C_{20250}$ and $C_{-10125}$ are irreducible. We assume that they are reducible and derive a contradiction. Let $C$ be $C_{20250}$ or $C_{-10125}$ and we write $C = D_{1} \sqcup \cdots \sqcup D_{n}$ where $D_{1}, \cdots, D_{n}$ are pairwise distinct integral curves. Since $C$ is reduced and nodal, by B\'{e}zout's theorem, we see
\[
\sum_{i < j} \deg D_{i} \deg D_{j} = \# \Sing C.
\]
The left hand side is $60$ or $66$ by Lemma \ref{reducible curves} (ii) and (iii). However, $\#\Sing C_{20250} = 36$ and $\# \Sing C_{-10125} = 45$. This is a contradiction.
\end{proof}

For an integral curve of degree $12$ with $n$ nodes, we calculate that its genus is $(55 - n)$ by the genus-degree formula. To summarize, we have the following.

\begin{cor}
\begin{enumerate}
\item $C_{-10125}$ is an integral, nodal and $\mathfrak{A}_{6}$-invariant projective plane curve of genus $10$.
\item $C_{20250}$ is an integral, nodal and $\mathfrak{A}_{6}$-invariant projective plane curve of genus $19$.
\end{enumerate}
\end{cor}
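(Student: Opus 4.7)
The proof is essentially an assembly of the preceding results together with the genus-degree formula, so my plan is to organize those ingredients in order.

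First I would observe that both $C_{-10125}$ and $C_{20250}$ lie in the linear system $\mathfrak{d}_{12}$ of $\mathcal{V}$-invariant plane curves of degree $12$ by construction (they are defined by $-10125\, F^{2} + \Phi$ and $20250\, F^{2} + \Phi$ respectively), so each is $\mathcal{V}$-invariant and hence carries a faithful $\mathfrak{A}_{6}$-action via the isomorphism $\mathcal{V} \cong \mathfrak{A}_{6}$. Reducedness was already verified in Proposition \ref{five singular curves} (the only nonreduced member being $C_{\infty}$), while Proposition \ref{irreducibility}(4) gives irreducibility of both curves. Together, reduced and irreducible mean integral, and Proposition \ref{irreducibility}(3) states that both curves are nodal, with the counts $\#\Sing(C_{20250}) = 36$ and $\#\Sing(C_{-10125}) = 45$ from parts (1) and (2).

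It then remains only to compute the geometric genus. For an integral plane curve of degree $d$ with $n$ nodes as its only singularities, the genus-degree formula gives
\[
g = \frac{(d-1)(d-2)}{2} - n.
\]
With $d = 12$ we obtain $g = 55 - n$; substituting $n = 45$ yields $g = 10$ for $C_{-10125}$ and $n = 36$ yields $g = 19$ for $C_{20250}$. This concludes both statements (1) and (2).

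Since every assertion has been previously established or follows from a standard classical formula, there is essentially no obstacle; the only mild subtlety is to remind the reader that the genus-degree formula in the form above applies precisely because the singularities are ordinary double points, which is exactly what Proposition \ref{irreducibility}(3) provides.
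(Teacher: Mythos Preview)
Your proposal is correct and matches the paper's approach exactly: the paper simply notes that for an integral degree-$12$ plane curve with $n$ nodes the genus-degree formula gives $g = 55 - n$, and then reads off the two cases from the node counts established in Proposition~\ref{irreducibility}. Your write-up is in fact more detailed than the paper's one-sentence justification.
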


\section{The $\mathfrak{A}_{6}$-invariant curve of genus $10$}

Let $C$ be a curve on which the alternating group $\mathfrak{A}_{6}$ acts. The group $\mathfrak{A}_{6}$ has $2$ conjugacy classes of subgroups which are isomorphic to $\mathfrak{A}_{5}$. For $G = \mathfrak{A}_{6}$ or $\mathfrak{A}_{5}$, we consider the natural morphism $\pi_{G} : C \rightarrow C / G$. Then $\pi_{G}$ corresponds to the field extension $k(C) / k(C / G)$. We can  take the morphism $f : C / \mathfrak{A}_{5} \rightarrow C / \mathfrak{A}_{6}$ corresponding to the field extension $k(C / \mathfrak{A}_{5}) / k(C / \mathfrak{A}_{6})$.
\[
\xymatrix{
C \ar[dd]_{\pi_{\mathfrak{A}_{6}}} \ar[rd]^{\pi_{\mathfrak{A}_{5}}} & & & k(C)\ar@{-}[dd] \ar@{-}[rd] & \\
& C / \mathfrak{A}_{5} \ar[ld]^{f} & & & k(C / \mathfrak{A}_{5}) \ar@{-}[ld] \\
C / \mathfrak{A}_{6} & & & k(C / \mathfrak{A}_{6}) &
}
\]
The following lemma is an elementary consequence of Galois theory.

\begin{lemma}\label{two isom for conj icosa}
\begin{enumerate}
\item If $L$ is the Galois closure of $k(C / \mathfrak{A}_{5}) / k(C / \mathfrak{A}_{6})$, then $L = k(C)$.
\item Take an subgroup $\mathfrak{A}_{5}'$ of $\mathfrak{A}_{6}$ isomorphic to $\mathfrak{A}_{5}$. Then $\mathfrak{A}_{5}'$ is conjugate to $\mathfrak{A}_{5}$ as a subgroup of $\mathfrak{A}_{6}$ if and only if there exists an isomorphism $k(C / \mathfrak{A}_{5}') \rightarrow k(C / \mathfrak{A}_{5})$ over $k(C / \mathfrak{A}_{6})$.
\end{enumerate}
\end{lemma}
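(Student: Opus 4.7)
The plan is to apply the fundamental theorem of Galois theory to the extension $k(C)/k(C/\mathfrak{A}_{6})$. Under the assumption that $\mathfrak{A}_{6}$ acts faithfully on $C$ (implicit from the context of the paper), this extension is Galois of degree $360$ with $\Gal(k(C)/k(C/\mathfrak{A}_{6})) = \mathfrak{A}_{6}$, and for every subgroup $\mathfrak{A}_{5}' \leq \mathfrak{A}_{6}$ isomorphic to $\mathfrak{A}_{5}$ the intermediate field $k(C/\mathfrak{A}_{5}')$ coincides with the fixed field $k(C)^{\mathfrak{A}_{5}'}$.

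For (1), I would identify the Galois closure $L$ of $k(C/\mathfrak{A}_{5})/k(C/\mathfrak{A}_{6})$ inside $k(C)$ with the fixed field of the largest normal subgroup of $\mathfrak{A}_{6}$ contained in $\mathfrak{A}_{5}$, namely the core $N := \bigcap_{g \in \mathfrak{A}_{6}} g \mathfrak{A}_{5} g^{-1}$. Since $\mathfrak{A}_{6}$ is simple and $\mathfrak{A}_{5}$ is a proper subgroup, $N$ must be trivial, and therefore $L = k(C)^{N} = k(C)$.

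For (2), I would invoke the standard fact that two intermediate fields $k(C)^{H_{1}}$ and $k(C)^{H_{2}}$ of a Galois extension are isomorphic over the base if and only if $H_{1}$ and $H_{2}$ are conjugate in the Galois group. For the forward direction, if $g \mathfrak{A}_{5}' g^{-1} = \mathfrak{A}_{5}$ for some $g \in \mathfrak{A}_{6}$, then the identity $\sigma(k(C)^{H}) = k(C)^{\sigma H \sigma^{-1}}$ shows that the restriction of $g$ to $k(C/\mathfrak{A}_{5}')$ gives an isomorphism onto $k(C/\mathfrak{A}_{5})$ over $k(C/\mathfrak{A}_{6})$. Conversely, given any isomorphism $\varphi : k(C/\mathfrak{A}_{5}') \to k(C/\mathfrak{A}_{5})$ over $k(C/\mathfrak{A}_{6})$, part (1) implies that both sides have Galois closure $k(C)$ over $k(C/\mathfrak{A}_{6})$, so $\varphi$ extends to an automorphism $g \in \Gal(k(C)/k(C/\mathfrak{A}_{6})) = \mathfrak{A}_{6}$; the same identity then forces $g \mathfrak{A}_{5}' g^{-1} = \mathfrak{A}_{5}$.

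There is no serious obstacle: both parts are direct consequences of the Galois correspondence, and the only group-theoretic input needed is the simplicity of $\mathfrak{A}_{6}$ in part (1). The only care required is in tracking how conjugation by Galois group elements interacts with fixed fields, and in noting that extension of $\varphi$ to all of $k(C)$ in the converse of (2) relies precisely on the Galois-closure statement (1).
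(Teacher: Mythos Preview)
Your proposal is correct and follows essentially the same approach as the paper: both use the simplicity of $\mathfrak{A}_{6}$ to show the core of $\mathfrak{A}_{5}$ is trivial for part (1), and both deduce part (2) from the Galois correspondence by relating conjugation in $\mathfrak{A}_{6}$ to isomorphisms of fixed fields. Your argument for the converse in (2) is actually stated more carefully than the paper's, since you explicitly invoke part (1) to extend $\varphi$ to an automorphism of $k(C)$ before conjugating, whereas the paper writes the composition $\varphi^{-1}\circ\tau\circ\varphi$ without making this extension explicit.
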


\begin{proof}
(1) Since $\mathfrak{A}_{6}$ is simple, the only normal subgroup of $\mathfrak{A}_{6}$ contained in $\mathfrak{A}_{5}$ is the trivial group $1$.

(2) If the permutation $\sigma \in \mathfrak{A}_{6}$ satisfies $\sigma^{-1} \mathfrak{A}_{5} \sigma = \mathfrak{A}_{5}'$, then it corresponds an isomorphism $k(C / \mathfrak{A}_{5}') \rightarrow k(C / \mathfrak{A}_{5})$ over $k(C / \mathfrak{A}_{6})$.

Conversely, let $\varphi$ be an isomorphism $k(C / \mathfrak{A}_{5}') \rightarrow k(C / \mathfrak{A}_{5})$ over $k(C / \mathfrak{A}_{6})$. For any isomorphism $\tau : k(C) \rightarrow k(C)$ over $k(C / \mathfrak{A}_{5})$, $\varphi^{-1} \circ \tau \circ \varphi$ is an isomorphism $k(C) \rightarrow k(C)$ over $k(C / \mathfrak{A}_{5}')$. Therefore, $\mathfrak{A}_{5}'$ is conjugate to $\mathfrak{A}_{5}$ as a subgroup of $\mathfrak{A}_{6}$.
\end{proof}

To study the curve $C$, we first describe the map $f : C / \mathfrak{A}_{5} \rightarrow C / \mathfrak{A}_{6}$. We give the ramification indices at the ramification points of $\pi_{G}$.

\begin{note}\label{remark}
For the morphism $\pi_{G} : C \rightarrow C / G$, let $P_{1}, \cdots, P_{s} \in C / G$ be the branched points and $r_{i}$ the ramification index over $P_{i}$. We may assume that $r_{1} \leq \cdots \leq r_{s}$. Then we often write $(r_{1}, \cdots, r_{s})$ as a tuple.
\end{note}

In general, the following proposition is a result of elementary calculations.

\begin{prop}\label{quotient curve condition}
Let $L := \dfrac{2g - 2}{|G|}$ for $g \geq 2$, $g'$ be the genus of $C / G$ and $s$ the number of branched points of $\pi_{G}$.
\begin{enumerate}
\item If $L < 1$, then one of the following holds:
\begin{enumerate}
\item[(i)] $g' = 1$, $s = 1$ and $r_{1} = \dfrac{1}{1 - L}$.
\item[(ii)] $g' = 0$ and $3 \leq s \leq 5$.
\end{enumerate} 
In particular, if $L < \dfrac{1}{2}$, then $g' = 0$ and $s = 3$ or $4$.

\item If $L < \dfrac{1}{6}$, then $g' = 0$, $s = 3$ and the triple of the ramification indices is one of the following:
\begin{itemize}
\item $(2, r_{2}, r_{3})$ where $3 \leq r_{2} \leq 5$ and $r_{3} = \dfrac{1}{\dfrac{1}{2} - L - \dfrac{1}{r_{2}}}$.
\item $(3, 3, 4)$. Then $L = \dfrac{1}{12}$.
\item $(3, 3, 5)$. Then $L = \dfrac{2}{15}$.
\end{itemize}
\end{enumerate}
\end{prop}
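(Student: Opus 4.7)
The plan is to apply the Riemann--Hurwitz formula to the Galois cover $\pi_G : C \to C/G$ and then carry out an elementary case analysis on the tuple $(g'; r_1, \ldots, r_s)$. Since $\pi_G$ has degree $|G|$ with constant ramification on each fiber above a branch point, Riemann--Hurwitz (after dividing by $|G|$) gives
\[
L = (2g'-2) + \sum_{i=1}^{s}\left(1-\frac{1}{r_i}\right).
\]
The only working fact I will use is that each $r_i \geq 2$, so each summand lies in $[1/2, 1)$, which yields two-sided bounds on $s$ once $L$ and $g'$ are fixed.

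For part (1): $g' \geq 2$ would force $L \geq 2$, so the hypothesis $L < 1$ leaves only $g' \in \{0, 1\}$. If $g' = 1$, the identity reduces to $L = \sum(1-1/r_i)$; the summand bound $\geq 1/2$ forces $s \leq 1$, and $s = 0$ is excluded since it would give $g = 1$, contradicting $g \geq 2$. Hence $s = 1$ and $r_1 = 1/(1-L)$. If $g' = 0$, the identity reads $\sum(1-1/r_i) = L + 2$, so $s/2 \leq L + 2 < s$, i.e., $L + 2 < s \leq 2L + 4$. Under $L < 1$ this gives $3 \leq s \leq 5$, and under the sharper hypothesis $L < 1/2$ further $s \leq 4$.

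For part (2), under $L < 1/6$ the same estimate gives $s \leq 2L + 4 < 25/6$, so $s \in \{3, 4\}$. I rule out $s = 4$ by observing that the smallest value of $\sum_{i=1}^{4}(1-1/r_i)$ strictly above $2$ is $13/6$, attained at $(2,2,2,3)$; hence $s = 4$ would force $L \geq 1/6$. With $s = 3$ and $g' = 0$, the identity becomes $1/r_1 + 1/r_2 + 1/r_3 = 1 - L$, and the bound $3/r_1 \geq 1 - L > 5/6$ confines $r_1 \in \{2, 3\}$. For $r_1 = 2$, the residual relation $1/r_2 + 1/r_3 = 1/2 - L$ together with $2/r_2 > 1/3$ and $1/r_2 < 1/2 - L$ pins $r_2 \in \{3, 4, 5\}$ and determines $r_3$ as stated. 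For $r_1 = 3$, one has $1/r_2 + 1/r_3 > 1/2$, forcing $r_2 = 3$ and $r_3 = 1/(1/3 - L)$; integrality combined with $0 < L < 1/6$ then leaves only $r_3 \in \{4, 5\}$, yielding $L = 1/12$ and $L = 2/15$ respectively. The entire argument is bookkeeping; the only step requiring real care is the exclusion of $s = 4$ in part (2), where one must identify the correct next-to-minimum value of $\sum(1-1/r_i)$ with four summands.
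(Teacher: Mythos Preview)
Your argument is correct and follows essentially the same route as the paper: apply Riemann--Hurwitz, bound each term $1-1/r_i$ in $[1/2,1)$, and do the resulting case analysis on $(g',s)$ and then on $(r_1,r_2,r_3)$. Two small clean-ups: in part~(2) your numerical bound should read $s \leq 2L+4 < 13/3$ rather than $25/6$ (the conclusion $s\leq 4$ is unaffected), and you should state explicitly that $g'=1$ is impossible when $L<1/2$ since then $L = 1-1/r_1 \geq 1/2$.
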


\begin{proof}
Note that $L > 0$ by the assumption $g \geq 2$. By Riemann-Hurwitz formula, we have
\begin{equation}\label{Hurwitz form}
L = 2g' - 2 + \sum_{i = 1}^{s} \left( 1 - \frac{1}{r_{i}} \right). \tag{I}
\end{equation}

(1) Assume $L < 1$. If $s = 0$, then $L = 2g' - 2$ is a positive integer, a contradiction. Thus $s > 0$. We determine $g'$ and $s$. By the equality (\ref{Hurwitz form}), we see that
\begin{equation}\label{base eq}
L - (2g' - 2) = \sum_{i = 1}^{s} \left( 1 - \frac{1}{r_{i}} \right) = s - \sum_{i = 1}^{s} \frac{1}{r_{i}}. \tag{II}
\end{equation}
Since
\begin{equation}\label{right iq}
\displaystyle\sum_{i = 1}^{s} \left(1 - \dfrac{1}{r_{i}} \right) \geq \dfrac{s}{2} \geq \dfrac{1}{2}, \tag{III}
\end{equation}
we obtain $L \geq 2g' - \dfrac{3}{2}$. If $L < 1$, then $g' = 0$ or $1$. If $L < \dfrac{1}{2}$, then $g' = 0$. 

On the other hand,
\begin{equation}\label{left iq}
s - \sum_{i = 1}^{s} \frac{1}{r_{i}} < s. \tag{IV}
\end{equation}
Hence, by the equality (\ref{base eq}), the inequality (\ref{right iq}) and (\ref{left iq}), we have
\[
L - (2g' - 2) < s \leq 2(L - (2g' - 2)).
\]
If $g' = 1$, then $L < s \leq 2L$. Since $L > 0$ and $L < 1$, we see $0 < s < 2$. Then $s = 1$, and $L = 1 - \dfrac{1}{r_{1}}$, i.e., (i) holds. If $g' = 0$, then $L + 2 < s \leq 2(L + 2)$. From $L < 1$, we see $2 < s < 6$, and (ii) holds. For $L < \dfrac{1}{2}$, we obtain $2 < s < 5$, i.e., $s = 3$ or $4$.

(2) Assume that $L < \dfrac{1}{6}$. By the statement (1), $g' = 0$ and $s = 3$ or $4$. We consider the ramification indices $(r_{1}, \cdots, r_{s})$. By the equality (\ref{Hurwitz form}), we have
\begin{equation}\label{rami form}
\sum_{i = 1}^{s} \frac{1}{r_{i}} = (s - 2) - L. \tag{V}
\end{equation}

Suppose that $s = 3$. If $r_{1} \geq 4$, then $\displaystyle\sum_{i = 1}^{3} \dfrac{1}{r_{i}} \leq \dfrac{3}{4}$ since $r_{1} \leq r_{i}$ for any $i$ by the assumption of Notation \ref{remark}. However, the right hand side of (\ref{rami form}) is greater than $\dfrac{5}{6}$. Thus $r_{1} = 2$ or $3$. Since $\dfrac{1}{r_{2}} + \dfrac{1}{r_{3}} = 1 - \dfrac{1}{r_{1}} - L$ and $r_{2} \leq r_{3}$, we see $\dfrac{2}{r_{2}} \geq 1 - \dfrac{1}{r_{1}} - L$, and 
\[
r_{1} \leq r_{2} \leq \dfrac{2}{1 - \dfrac{1}{r_{1}} - L}.
\]
Take $r_{1} = 2$. If $r_{2} = 2$, then $1 + \dfrac{1}{r_{3}} > 1 > 1 - L$, which contradicts the equality (\ref{rami form}). Then $r_{3} \geq 3$. Since $\dfrac{2}{1 - \dfrac{1}{r_{1}} - L} = \dfrac{2}{\dfrac{1}{2} - L} < 6$, we obtain $3 \leq r_{2} \leq 5$. On the other hand, take $r_{1} = 3$. Then $\dfrac{2}{1 - \dfrac{1}{r_{1}} - L} = \dfrac{2}{\dfrac{2}{3} - L} < 4$. Thus $r_{2} = 3$. Since $\dfrac{1}{r_{3}} = \dfrac{1}{3} - L$ and $0 < L < \dfrac{1}{6}$, we see $r_{3} = 4$ or $5$.

Suppose that $s = 4$. By $0 < L < \dfrac{1}{6}$ and $\displaystyle\sum_{i = 1}^{4} \dfrac{1}{r_{i}} = 2 - L$, we see $\dfrac{11}{6} < \displaystyle\sum_{i = 1}^{4} \dfrac{1}{r_{i}} < 2$. Since $(r_{1}, r_{2}, r_{3}, r_{4}) = (2, 2, 2, 2)$ does not satisfy the inequality, we have $\displaystyle\sum_{i = 1}^{4} \dfrac{1}{r_{i}} \leq \dfrac{1}{2} + \dfrac{1}{2} + \dfrac{1}{2} + \dfrac{1}{3} = \dfrac{11}{6}$. However, this is a contradiction to the former inequality. Hence, there is no $4$-tuple satisfying the equality (\ref{rami form}) for $s = 4$.
\end{proof}

This proposition implies the following lemma.

\begin{lemma}\label{genus10rami}
Let $C$ be a smooth curve of genus $10$ with a faithful $\mathfrak{A}_{6}$-action.
\begin{enumerate}
\item $C / \mathfrak{A}_{6}$ is rational, and the ramification indices at the branched points of $\pi_{\mathfrak{A}_{6}}$ are $(2, 4, 5)$.
\item $C / \mathfrak{A}_{5}$ is rational, and the ramification indices at the branched points of $\pi_{\mathfrak{A}_{5}}$ are $(4, 4, 5)$ or $(2, 2, 2, 5)$.
\end{enumerate}
\end{lemma}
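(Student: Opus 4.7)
The plan is to apply Proposition~\ref{quotient curve condition} to each of the quotient morphisms $\pi_{\mathfrak{A}_6}$ and $\pi_{\mathfrak{A}_5}$, and to exploit the inclusion $\mathfrak{A}_5 \subset \mathfrak{A}_6$ so that the ramification data of $\pi_{\mathfrak{A}_5}$ is constrained by that of $\pi_{\mathfrak{A}_6}$.

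For (1), $\pi_{\mathfrak{A}_6}$ has degree $|\mathfrak{A}_6| = 360$, so $L = (2\cdot 10 - 2)/360 = 1/20 < 1/6$, and Proposition~\ref{quotient curve condition}(2) applies. Hence $C/\mathfrak{A}_6 \cong \mathbb{P}^1$, $s = 3$, and the triple lies in one of the three families listed there. The sporadic triples $(3,3,4)$ and $(3,3,5)$ correspond to $L = 1/12$ and $L = 2/15$, which are not $1/20$. For the family $(2, r_2, r_3)$, the formula $r_3 = 1/(9/20 - 1/r_2)$ gives an integer only at $r_2 = 4$ (yielding $r_3 = 5$); the case $r_2 = 3$ gives $60/7$, and $r_2 = 5$ gives $r_3 = 4 < r_2$. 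Hence the triple is $(2, 4, 5)$.

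For (2), $\pi_{\mathfrak{A}_5}$ has degree $60$ and $L = 18/60 = 3/10$. Since $L < 1/2$, Proposition~\ref{quotient curve condition}(1) forces $C/\mathfrak{A}_5 \cong \mathbb{P}^1$ and $s \in \{3, 4\}$. Because $L > 1/6$, the finer enumeration of Proposition~\ref{quotient curve condition}(2) does not apply, and one must enumerate tuples solving $\sum 1/r_i = s - 2 - 3/10$ directly. To cut the search I would use that any ramified point of $\pi_{\mathfrak{A}_5}$ is also ramified for $\pi_{\mathfrak{A}_6}$, with $\mathfrak{A}_5$-stabilizer a subgroup of the cyclic $\mathfrak{A}_6$-stabilizer; by part (1) the latter has order in $\{2, 4, 5\}$, so each $r_i$ divides one of $2$, $4$, $5$, and in particular $r_i \in \{2, 4, 5\}$. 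A case check over tuples in $\{2, 4, 5\}^s$ then gives $(4, 4, 5)$ as the only solution with $s = 3$ and $(2, 2, 2, 5)$ as the only solution with $s = 4$.

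The main obstacle is the case $L \geq 1/6$ in part (2), where Proposition~\ref{quotient curve condition}(2) no longer pins down the ramification tuple. A purely numerical Riemann--Hurwitz enumeration admits several extra triples (for instance $(2, 6, 30)$, $(2, 10, 10)$, $(3, 3, 30)$, and $(3, 5, 6)$) that would have to be eliminated separately; exploiting the inclusion $\mathfrak{A}_5 < \mathfrak{A}_6$ to force $r_i \in \{2, 4, 5\}$ is the key step that makes the enumeration tractable.
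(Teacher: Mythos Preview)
Your proposal is correct and follows essentially the same route as the paper: compute $L$ for each quotient, invoke Proposition~\ref{quotient curve condition} to get $g'=0$ and the range of $s$, and in part~(2) use the factorization $\pi_{\mathfrak{A}_6}=f\circ\pi_{\mathfrak{A}_5}$ to force each $r_i$ to divide one of $2,4,5$ before enumerating. The paper phrases this last constraint via the composite morphism $f$ rather than via stabilizers, but the content is identical, and the resulting case check over $r_i\in\{2,4,5\}$ is the same.
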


\begin{proof}
(1) We see
\[
L = \frac{2 \cdot 10 - 2}{|\mathfrak{A}_{6}|} = \frac{1}{20} < \frac{1}{6}.
\]
Hence, by Proposition \ref{quotient curve condition} (2), the genus of $C / \mathfrak{A}_{6}$ is $0$, i.e., $C / \mathfrak{A}_{6}$ is rational, and the number $s$ of branched points of $\pi_{\mathfrak{A}_{6}}$ is $3$. Since $L \neq \dfrac{1}{12}$ and $L \neq \dfrac{2}{15}$, the ramification indices are $(2, r_{2}, r_{3})$ with $3 \leq r_{2} \leq 5$ and
\[
r_{3} = \frac{1}{\dfrac{9}{20} - \dfrac{1}{r_{2}}}.
\]
Since $r_{3}$ is an integer and $r_{2} \leq r_{3}$, we have $r_{2} = 4$ and $r_{3} = 5$. Thus the ramification indices at the branched points of $\pi_{\mathfrak{A}_{6}}$ are $(2, 4, 5)$.

(2) We see
\[
L = \frac{2 \cdot 10 - 2}{|\mathfrak{A}_{5}|} = \frac{3}{10} < \frac{1}{2}.
\]
By Proposition \ref{quotient curve condition} (1), the genus of $C / \mathfrak{A}_{5}$ is $0$, i.e. $C / \mathfrak{A}_{5}$ is rational, and the number $s$ of branched points of $\pi_{\mathfrak{A}_{5}}$ is $s = 3$ or $4$. On the other hand, since we have a morphism $f : C / \mathfrak{A}_{5} \rightarrow C / \mathfrak{A}_{6}$ with $\pi_{\mathfrak{A}_{6}} = f \circ \pi_{\mathfrak{A}_{5}}$, the ramification indices $r_{i}$ are divisors of $2$, $4$ or $5$, i.e., $r_{i} = 2$, $4$ or $5$. We recall the equation (\ref{rami form}) in the proof of Proposition \ref{quotient curve condition}:
\[
\sum_{i = 1}^{s} \frac{1}{r_{i}} = (s - 2) - \frac{3}{10}.
\]

Suppose that $s = 3$. Then
\[
\sum_{i = 1}^{3} \frac{1}{r_{i}} = \frac{7}{10}.
\]
If $r_{1} = 2$, then $r_{2} = 2$, $4$ or $5$ and we have $\dfrac{1}{r_{1}} + \dfrac{1}{r_{2}} + \dfrac{1}{r_{3}} > \dfrac{1}{2} + \dfrac{1}{5} = \dfrac{7}{10}$. This is a contradiction. Assume that $r_{1} = 4$. Then $\dfrac{1}{r_{2}} + \dfrac{1}{r_{3}} = \dfrac{9}{20}$, and $\dfrac{20}{9} < r_{2} \leq \dfrac{40}{9}$. Thus $(r_{1}, r_{2}, r_{3}) = (4, 4, 5)$. Assume that $r_{1} = 5$. Then $r_{1} = r_{2} = r_{3} = 5$, but then $\displaystyle\sum_{i = 1}^{3} \dfrac{1}{r_{i}} = \dfrac{3}{5} \neq \dfrac{7}{10}$.

Suppose that $s = 4$. Then
\[
\sum_{i = 1}^{4} \frac{1}{r_{i}} = \frac{17}{10}.
\]
If $r_{2} > 2$, then $r_{2} \geq 4$ and $\displaystyle\sum_{i = 1}^{4} \dfrac{1}{r_{i}} \leq \dfrac{1}{2} + \dfrac{1}{4} + \dfrac{1}{4} + \dfrac{1}{4} < \dfrac{17}{10}$. Thus $r_{1} = r_{2} = 2$. We obtain $\dfrac{1}{r_{3}} + \dfrac{1}{r_{4}} = \dfrac{7}{10}$, and $\dfrac{10}{7} < r_{3} \leq \dfrac{20}{7}$. Hence, $r_{3} = 2$, and we see $r_{4} = 5$. Therefore, the tuple of ramification indices is $(2, 2, 2, 5)$.
\end{proof}

By Lemma \ref{genus10rami}, $f$ is a rational surjection $\mathbb{P}^{1} \rightarrow \mathbb{P}^{1}$, and we can write it as a rational function.

\begin{lemma}\label{genus10f}
Let $C$ be a smooth curve of genus $10$ with a faithful $\mathfrak{A}_{6}$-action. Take the coordinate on $C / \mathfrak{A}_{6} \cong \mathbb{P}^{1}$ for which $1 \in \mathbb{P}^{1}$ is the branched point of index $2$, $0 \in \mathbb{P}^{1}$ is the branched point of index $4$ and $\infty \in \mathbb{P}^{1}$ is the branched point of index $5$. Then the following holds.
\begin{enumerate}
\item If the ramification indices at the branched points of $\pi_{\mathfrak{A}_{5}}$ are $(4, 4, 5)$, then we can write
\[
f(w) = \frac{w^{4}(w^{2} + 4w + 20)}{256(w - 1)}
\]
in a suitable coordinate on $C / \mathfrak{A}_{5}$.
\item If the ramification indices at the branched points of $\pi_{\mathfrak{A}_{5}}$ are $(2, 2, 2, 5)$, then we can write
\[
f(w) = \frac{\lambda_{0} w^{4}((256\lambda_{0} - 25)w - 50)^{2}}{2500(w - 1)}
\]
in a suitable coordinate on $C / \mathfrak{A}_{5}$ where $\lambda_{0}$ is a solution of the quadratic equation
\[
442368 \Lambda^{2} - 72000 \Lambda + 3125 = 0.
\]
\end{enumerate}
\end{lemma}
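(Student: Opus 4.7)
The plan is to deduce the ramification profile of $f$ over each branch point of $\pi_{\mathfrak{A}_{6}}$, then write $f$ in a parametric form compatible with that profile, and finally solve the resulting polynomial system. The composition $\pi_{\mathfrak{A}_{6}} = f \circ \pi_{\mathfrak{A}_{5}}$ and the multiplicativity $e_{\pi_{\mathfrak{A}_{6}}} = e_{f} \cdot e_{\pi_{\mathfrak{A}_{5}}}$ determine the tuple of indices of $f$ above each of $0$, $1$, $\infty$. In Case (1), the profiles work out to $(1,5)$ over $\infty$ (the $\pi_{\mathfrak{A}_{5}}$-branch point of index $5$ contributes $e_{f} = 1$, and one $\pi_{\mathfrak{A}_{5}}$-unramified point contributes $e_{f} = 5$), $(1,1,4)$ over $0$ (the two $\pi_{\mathfrak{A}_{5}}$-branch points of index $4$ contribute $e_{f} = 1$, together with one unramified point with $e_{f} = 4$), and $(2,2,2)$ over $1$. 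In Case (2), the profile is $(1,5)$ over $\infty$, $(2,4)$ over $0$, and $(1,1,2,2)$ over $1$; of the two distributions of the three index-$2$ branch points of $\pi_{\mathfrak{A}_{5}}$ consistent with the degree of $f$, the one placing two above $1$ and one above $0$ is the only one compatible with an order-$4$ monodromy element acting on the six cosets of $\mathfrak{A}_{5}$ in $\mathfrak{A}_{6}$.

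Next I fix coordinates on $C / \mathfrak{A}_{5} \cong \mathbb{P}^{1}$ by sending the unique point with $e_{f} = 5$ above $\infty$ to $w = \infty$, the unique point with $e_{f} = 1$ above $\infty$ to $w = 1$, and the unique point with $e_{f} = 4$ above $0$ to $w = 0$. These three normalisations exhaust $\Aut(\mathbb{P}^{1})$, so $f$ takes the form
\[
f(w) = \frac{P(w)}{w - 1}
\]
with $P$ a polynomial of degree $6$ having a root of order $4$ at $w = 0$. In Case (1) I write $P(w) = w^{4}(a w^{2} + b w + c)$, leaving three parameters $a, b, c$; in Case (2) I write $P(w) = \lambda w^{4}(\mu w - 1)^{2}$, leaving two parameters $\lambda, \mu$.

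The final ingredient is the prescribed ramification above $1$. In Case (1) we need the numerator $N(w) := P(w) - (w - 1)$ to be the square of a cubic, $N(w) = a(w^{3} + A w^{2} + B w + C)^{2}$. Matching the seven coefficients produces six equations in the six unknowns $a, b, c, A, B, C$, and these force $(a, b, c, A, B, C) = (1/256,\ 1/64,\ 5/64,\ 2,\ 8,\ -16)$, giving the stated formula. In Case (2) we require $N(w) = \lambda\mu^{2}(w^{2} + \alpha w + \beta)(w^{2} + \gamma w + \delta)^{2}$. Matching coefficients and eliminating $\alpha, \beta, \gamma, \delta$ and then $\mu$ reduces the system to the single quadratic $442368\,\Lambda^{2} - 72000\,\Lambda + 3125 = 0$ in $\Lambda = \lambda$, after which $\mu = (256\lambda - 25)/50$; substituting back yields the formula in the statement.

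The main obstacle is the elimination in Case (2): the symbolic system is mildly overdetermined, and although each individual step is routine, keeping track of the factorisations and verifying that the displayed quadratic really is the only surviving constraint takes some care. I would perform the elimination with a computer algebra system, as in Section~2, to confirm both the quadratic and the linear expression for $\mu$.
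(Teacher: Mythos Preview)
Your approach is essentially the paper's: fix coordinates on source and target so that the point of $f$-index $5$ over $\infty$ goes to $w=\infty$, the unramified preimage of $\infty$ goes to $w=1$, and the point of $f$-index $4$ over $0$ goes to $w=0$; then write $f(w)=P(w)/(w-1)$ with the correct zero structure and impose that the numerator of $f-1$ has the prescribed square factor. The parameterizations differ only cosmetically (the paper writes $w^{4}(\kappa w+\lambda)^{2}$ where you write $\lambda w^{4}(\mu w-1)^{2}$), and both cases close with the same coefficient-matching elimination, which the paper carries out with SINGULAR.

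The one substantive difference is how you exclude, in Case~(2), the distribution placing all three index-$2$ branch points of $\pi_{\mathfrak{A}_{5}}$ above $0$. The paper writes down the corresponding ansatz $f(w)=w^{2}(\kappa w^{2}+\lambda w+\mu)^{2}/(w-1)$, matches coefficients, and derives a contradiction ($\alpha=0$) by direct computation. You instead invoke monodromy: the local monodromy at $0$ is an order-$4$ element of $\mathfrak{A}_{6}$, and any such element acts on the six cosets of $\mathfrak{A}_{5}$ with cycle type $4+2$ (in either degree-$6$ action, since the outer automorphism of $\mathfrak{A}_{6}$ fixes this conjugacy class), forcing the fibre profile over $0$ to be $(2,4)$ rather than $(2,2,2)$. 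This is cleaner and avoids a page of algebra; on the other hand, the paper's computational route requires no group-theoretic input beyond what is already on the table. You should, however, state explicitly the parity argument (the number of $e_{f}=1$ points over $1$ must be even) that cuts the four a priori distributions down to the two you mention before applying the monodromy step.
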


\begin{remark}
The possibility of the form (1) is later excluded.
\end{remark}

\begin{proof}
Let $Q_{1}^{(r)}, Q_{2}^{(r)}, \cdots$ denote the branch points of $\pi_{\mathfrak{A}_{5}}$ of index $r$. For each $r$ and $i$,  the ramification index of $\pi_{\mathfrak{A}_{6}}$ at $f(Q_{i}^{(r)})$ is divisible by $r$.

(1) The branch points of $\pi_{\mathfrak{A}_{5}}$ are $Q_{1}^{(4)}$, $Q_{2}^{(4)}$ and $Q_{1}^{(5)}$. Since the only ramification index of $\pi_{\mathfrak{A}_{6}}$ divisible by $4$ (resp. $5$) is $4$ (resp. $5$), the map $f$ sends $Q_{1}^{(4)}$ and $Q_{2}^{(4)}$ to $0$ and $Q_{1}^{(5)}$ to $\infty$. Since $\deg f = [k(C / \mathfrak{A}_{5}) : k(C / \mathfrak{A}_{6})] = [\mathfrak{A}_{6} : \mathfrak{A}_{5}] = 6$, there are $1$ ramified point of $f$ of index $4$ over $0 \in \mathbb{P}^1$, $1$ ramified point of index $5$ over $\infty \in \mathbb{P}^1$ and $3$ ramified points of index $2$ over $1 \in \mathbb{P}^1$. By taking a suitable projective transformation of  $C / \mathfrak{A}_{5} \cong \mathbb{P}^{1}$, we may assume that $f^{-1}(0) = \left\{ 0, Q_{1}^{(4)}, Q_{2}^{(4)} \right\}$ and $f^{-1}(\infty) = \left\{ \infty, Q_{1}^{(5)} = 1 \right\}$. Then we can write
\[
f(w) = \frac{w^{4}(\kappa w^{2} + \lambda w + \mu)}{w - 1}
\]
for some $\kappa$, $\lambda$, $\mu \in \mathbb{C}$. On the other hand, the numerator of
\[
f(w) - 1 = \frac{\kappa w^6 + \lambda w^5 + \mu w^4 - w + 1}{w - 1}
\]
decomposes as $(\alpha w^{3} + \beta w^{2} + \gamma w + \delta)^{2}$ for $\alpha$, $\beta$, $\gamma$, $\delta \in \mathbb{C}$ since $f^{-1}(1)$ consists of $3$ ramified points of index $2$. By equating the corresponding coefficients of powers of $w$,
\[
\left\{
\begin{array}{rcl}
1 & = & \delta^{2}, \\
-1 & = & 2\gamma\delta, \\
0 & = & 2\beta\delta + \gamma^{2}, \\
0 & = & 2\alpha\delta + 2\beta\gamma, \\
\mu & = & 2\alpha\gamma + \beta^{2}, \\
\lambda & = & 2\alpha\beta, \\
\kappa & = & \alpha^{2}.
\end{array}
\right.
\]
We see $\delta = \pm 1$ by the first equality, $\gamma = \mp\dfrac{1}{2}$ by the second equality, $\beta = \mp\dfrac{1}{8}$ by the third equality and $\alpha = \mp\dfrac{1}{16}$ by the fourth equality. Therefore, by the $5$-th to $7$-th equalities,
\[
(\kappa, \lambda, \mu) = \left( \dfrac{1}{256}, \dfrac{1}{64}, \dfrac{5}{64} \right),
\]
and $f(w)$ is given.

(2) The branch points of $\pi_{\mathfrak{A}_{5}}$ are $Q_{1}^{(2)}$, $Q_{2}^{(2)}$, $Q_{3}^{(2)}$ and $Q_{1}^{(5)}$. The map $f$ sends $Q_{1}^{(5)}$ to $\infty$ and $Q_{i}^{(2)}$ to $0$ or $1$ for each $i$. Thus $f$ has $1$ ramified point of index $5$ over $\infty$ and $Q_{1}^{(2)}, Q_{2}^{(2)}$ and $Q_{3}^{(2)}$ satisfy one of the following:
\begin{enumerate}
\item[(i)] All points are sent to $0$.
\item[(ii)] $2$ points are sent to $0$ and $1$ point is sent to $1$.
\item[(iii)] $1$ point is sent to $0$ and $2$ points are sent to $1$.
\item[(iv)] All points are sent to $1$.
\end{enumerate}
If $Q_{i}^{(2)}$ is sent to $1$, then it is an unramified point for $f$. Since any point which is sent to $1$ and different from $Q_{i}^{(2)}$ is a ramified point of index $2$ for $f$ and $\deg f = 6$ is even, (ii) and (iv) are impossible.

Assume the case (i). Then there are $3$ ramified points $Q_{1}^{(2)}, Q_{2}^{(2)}, Q_{3}^{(2)}$ for $f$ of index $2$ over $0$ and $3$ ramified points of index $2$ over $1$. By taking a suitable projective transformation on $C / \mathfrak{A}_{5} \cong \mathbb{P}^{1}$, we may assume that $f^{-1}(0) = \left\{ Q_{1}^{(2)} = 0, Q_{2}^{(2)}, Q_{3}^{(2)} \right\}$ and $f^{-1}(\infty) = \left\{ \infty, Q_{1}^{(5)} = 1 \right\}$. Then we can write
\[
f(w) = \frac{w^2 (\kappa w^2 + \lambda w + \mu)^2}{w - 1}
\]
for some $\kappa$, $\lambda$, $\mu \in \mathbb{C}$. On the other hand, since $f^{-1}(1)$ consists $3$ ramified point $Q_{1}^{(2)}$, $Q_{2}^{(2)}$, $Q_{3}^{(2)}$ of index $2$, the numerator of
\[
f(w) - 1 = \frac{\kappa^2 w^6 + 2 \kappa\lambda w^5 + (2\kappa\mu + \lambda^2) w^4 + 2 \lambda\mu w^3 + \mu^2 w^2 - w + 1}{w - 1}
\]
decomposes as $(\alpha w^3 + \beta w^2 + \gamma w + \delta)^2$ for some $\alpha \neq 0$, $\beta$, $\gamma$, $\delta \in \mathbb{C}$. Then we may assume that $\alpha = \kappa$. We obtain
\[
\left\{
\begin{array}{rcl}
1 & = & \delta^{2}, \\
-1 & = & 2\gamma\delta, \\
\mu^2 & = & 2\beta\delta + \gamma^{2}, \\
2 \lambda \mu & = & 2\alpha\delta + 2\beta\gamma, \\
2 \alpha \mu + \lambda^2 & = & 2\alpha\gamma + \beta^{2}, \\
2 \alpha \lambda & = & 2\alpha\beta,
\end{array}
\right.
\]
by equating the corresponding coefficients. Since we see $\delta = \pm 1$, $\gamma = \mp \dfrac{1}{2}$ and $\beta = \lambda$ by the first, second and sixth equations,
\[
\left\{
\begin{array}{rcl}
\mu^2 & = & \pm 2\beta + \dfrac{1}{4}, \\
2 \beta \mu & = & \pm 2\alpha \mp \beta, \\
2 \alpha \mu + \beta^2 & = & \mp \alpha + \beta^{2}.
\end{array}
\right.
\]
By the last equation, we see $\alpha(2\mu \pm 1) = 0$. Since $\alpha \neq 0$, we have $\mu = \mp \dfrac{1}{2}$. Thus $\beta = 0$ by the first equation, and $\alpha = 0$ by the second equation. This is a contradiction.

Therefore, we have (iii). Apart from $Q_{i}^{(2)}$, there is $1$ ramified point of index $4$ over $0$ and there are $2$ ramified points of index $2$ over $1$. By taking a suitable projective transformation on $C / \mathfrak{A}_{5} \cong \mathbb{P}^{1}$, we may assume that $f^{-1}(0) = \left\{ 0, Q_{1}^{(2)} \right\}$ and $f^{-1}(\infty) = \left\{ \infty, Q_{1}^{(5)} = 1 \right\}$. Then we can write
\[
f(w) = \frac{w^{4}(\kappa w + \lambda)^{2}}{w - 1}
\]
for $\kappa$, $\lambda \in \mathbb{C}$. On the other hand, since $f^{-1}(1)$ consists of $Q_{2}^{(2)}, Q_{3}^{(2)}$ and $2$ ramified points of index $2$, the numerator of
\[
f(w) - 1 = \frac{\kappa^2 w^6 + 2\kappa\lambda w^5 + \lambda^2 w^4 - w + 1}{w - 1}
\]
decomposes as $(w^{2} + \alpha w + \beta)^{2}(\gamma w^{2} + \delta w + \varepsilon)$ for some $\alpha \neq 0$, $\beta$, $\gamma$, $\delta$, $\varepsilon \in \mathbb{C}$. By equating the corresponding coefficients for powers of $w$,
\begin{equation}\label{genus10func}
\left\{
\begin{array}{rcl}
1 & = & \beta^{2} \varepsilon, \\
-1 & = & 2\alpha\beta\varepsilon + \beta^{2}\delta, \\
0 & = & (\alpha^{2} + 2\beta)\varepsilon + 2\alpha\beta\delta + \beta^{2}\gamma, \\
0 & = & 2\alpha\varepsilon + (\alpha^{2} + 2\beta)\delta + 2\alpha\beta\gamma, \\
\lambda^{2} & = & \varepsilon + 2\alpha\delta + (\alpha^{2} + 2\beta)\gamma, \\
2\kappa\lambda & = & \delta + 2\alpha\gamma, \\
\kappa^{2} & = & \gamma.
\end{array} \tag{$\ast$}
\right.
\end{equation}
We solve this system of equations with SINGULAR. We put \texttt{a} $= \alpha$, \texttt{b} $= \beta$, \texttt{c} $= \gamma$, \texttt{d} $= \delta$, \texttt{e} $= \varepsilon$, \texttt{k} $= \kappa$ and \texttt{l} $= \lambda$. Let \texttt{I} be the ideal corresponding to (\ref{genus10func}). We check the elimination ideal \texttt{EI} of \texttt{I}.
\begin{code}{}{}
ring R = 0,(a,b,c,d,e,k,l),dp;
poly p0 = b2e - 1;
poly p1 = 2abe + b2d + 1;
poly p2 = a2e + 2be + 2abd + b2c;
poly p3 = 2ae + a2d + 2bd + 2abc;
poly p4 = e + 2ad + a2c + 2bc - l2;
poly p5 = d + 2ac - 2kl;
poly p6 = c - k2;
ideal I = p0,p1,p2,p3,p4,p5,p6;

//Eliminate a,b,c,d and e from I.
//Take EI to be this elimination ideal.
ideal EI = eliminate(I,abcde);
EI;
\end{code}
The code returns generators of \texttt{EI}.
\begin{result}{}{}
EI[1]=3456kl+1152l2-125
EI[2]=54k2-9kl+l2
EI[3]=256l3+50k-25l
\end{result}
By this third line, the pair $(\kappa, \lambda)$ satisfies $50\kappa = -\lambda(256\lambda^{2} - 25)$. Thus we can write
\[
f(w) = \frac{\lambda^{2}w^{4}((256\lambda^{2} - 25)w - 50)^{2}}{2500(w - 1)}.
\]
Finally, we eliminate \texttt{k} from \texttt{EI} to give the value of $\lambda^{2}$.
\begin{code}{}{}
eliminate(EI, k);
\end{code}
The line returns 
\[
442368 \lambda^{4} - 72000 \lambda^{2} + 3125.
\]
This means that $\lambda^{2}$ is a root of $442368 \Lambda^{2} - 72000 \Lambda + 3125$. Therefore, we have $f(w)$ as in the statement (2).
\end{proof}

Now we exclude the possibility of Lemma \ref{genus10f} (1) by considering the Galois closure of the corresponding extension $M / K$.

\begin{lemma}\label{genus10not(1)}
Let $\varphi$ be a surjective morphism $\mathbb{P}^{1} \rightarrow \mathbb{P}^{1}$ given by
\[
\varphi(w) = \dfrac{w^{4}(w^{2} + 4w + 20)}{256(w - 1)},
\]
$M / K$ the corresponding field extension and $L$ its Galois closure. Then $\Gal(L / K) \not\cong \mathfrak{A}_{6}$.
\end{lemma}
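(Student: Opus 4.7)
The plan is to read the Galois group off from the ramification of $\varphi$ directly: I would show that the local monodromy at the branch point $0 \in \mathbb{P}^{1}$ is an odd permutation of the six sheets, so the Galois group cannot sit inside $\mathfrak{A}_{6}$.

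Since $\varphi$ has degree $6$, the group $G := \Gal(L / K)$ embeds as a transitive subgroup of $S_{6}$ via its action on the six $K$-embeddings of $M$ into $L$. The composition of any homomorphism $\mathfrak{A}_{6} \to S_{6}$ with the sign character is trivial, since $\mathfrak{A}_{6}$ is simple of order greater than $2$ and thus admits no surjection onto $\{\pm 1\}$. Consequently, if $G \cong \mathfrak{A}_{6}$, then $G$ must land in the natural copy $\mathfrak{A}_{6} \subset S_{6}$, and it is enough to exhibit an odd permutation in $G$.

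I would then compute the fiber $\varphi^{-1}(0)$. The numerator $w^{4}(w^{2} + 4w + 20)$ has a root of order $4$ at $w = 0$ and two simple roots at $w = -2 \pm 4i$, while the denominator $256(w - 1)$ does not vanish at any of these three points. So the ramification multiplicities of $\varphi$ above $0$ are $(4, 1, 1)$. By the standard dictionary between the ramification of a branched cover of $\mathbb{P}^{1}$ and the cycle pattern of monodromy, a small loop around $0$ acts on a nearby generic fiber as a single $4$-cycle on the six sheets. Such a permutation is odd, so $G \not\subseteq \mathfrak{A}_{6}$, and therefore $G \not\cong \mathfrak{A}_{6}$.

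The argument becomes almost immediate once the ramification of $\varphi$ over $0$ is written down; the only real step to justify carefully is the translation between local ramification multiplicities and the disjoint-cycle structure of the monodromy, which is a standard fact about branched covers of Riemann surfaces. As a cross-check, one could equivalently use the fiber over $1$, which by the analysis in the proof of Lemma \ref{genus10f} (1) consists of three ramification points of index $2$, giving a monodromy element of cycle type $(2, 2, 2)$ — also odd.
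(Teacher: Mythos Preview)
Your argument is correct and is actually more direct than the paper's. Both proofs establish the same underlying fact --- that the Galois group, viewed inside $S_6$ via its action on the roots, contains an odd permutation --- but they reach it by different means. The paper proceeds algebraically: it writes down the sextic $p(w) = w^6 + 4w^5 + 20w^4 - 256tw + 256t$, computes its discriminant as an explicit polynomial in $t$, and observes that this discriminant vanishes to odd order (namely $3$) at $t = 0$, hence is not a square in $\mathbb{C}(t)$; this produces a degree-$2$ subextension $K(\sqrt{\Delta})/K$ inside $L$, impossible if $\Gal(L/K) \cong \mathfrak{A}_6$. You instead read off the ramification type $(4,1,1)$ over $0$ directly from the factored numerator and invoke the topological monodromy, obtaining a $4$-cycle. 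The two computations are in fact the same arithmetic in disguise: the order of vanishing of the discriminant at a branch point equals $\sum (e_i - 1)$, and the sign of the local monodromy is $(-1)^{\sum(e_i - 1)}$, so ``discriminant vanishes to odd order at $0$'' and ``monodromy at $0$ is odd'' are equivalent statements. Your route avoids the resultant computation entirely and makes the reason transparent; the paper's route has the virtue of being purely algebraic and not appealing to the Riemann existence picture. Your cross-check via the fiber over $1$ (cycle type $(2,2,2)$, also odd) is a nice confirmation.
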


\begin{proof}
Since the fields $M$ and $K$ correspond to $\mathbb{P}^{1}$, we may assume that $M = \mathbb{C}(w)$ and $K = \mathbb{C}(t)$ where $w$ and $t$ are transcendental over $\mathbb{C}$. Then the pull back $\varphi^{*}$ is an injective homomorphism $\mathbb{C}(t) \rightarrow \mathbb{C}(w)$ given by $\varphi(w) = t$. Thus we obtain
\[
p(w) := w^{6} + 4 w^{5} + 20 w^{4} + (-256 t) w + (256 t) = 0.
\]
Suppose that the factorization of $p(w)$ over $L$ is $(w - \phi_{1}) \cdots (w - \phi_{6})$. The Galois group $\Gal(L / K)$ is a subgroup of the permutation group of $\phi_{1}, \cdots, \phi_{6}$. We will give an intermediate field $F$ of $L / K$ which has degree $2$ over $K$. We can check that the discriminant $\Delta$ of $p(w)$ is
\[
\begin{array}{rcl}
\Delta & = & R\left(p, \dfrac{\partial p}{\partial w}\right) \\
& & \\
& = & \left| \begin{array}{ccccccccccc}
1 & 4 & 20 & 0 & 0 & -256t & 256t & 0 & 0 & 0 & 0 \\
0 & 1 & 4 & 20 & 0 & 0 & -256t & 256t & 0 & 0 & 0 \\
&&&&& \vdots &&&&&\\
0 & 0 & 0 & 0 & 1 & 4 & 20 & 0 & 0 & -256t & 256t\\
6 & 20 & 80 & 0 & 0 & -256t & 0 & 0 & 0 & 0 & 0 \\
&&&&& \vdots &&&&&\\
0 & 0 & 0 & 0 & 0 & 6 & 20 & 80 & 0 & 0 & -256t \\
\end{array} \right| \\
& & \\
& = & -879609302220800000 t^3 + 2638827906662400000 t^4 \\
& & -2638827906662400000 t^5 + 879609302220800000 t^6.
\end{array}
\]
Let $\psi := \displaystyle\prod_{i < j} (\phi_{j} - \phi_{i}) \in L$, and then $\psi^{2} = \Delta$. Since the order of $\Delta$ in $t$ is $3$, which is odd, we have $\psi \not\in K$, and $F := K(\psi)$ is an extension field of degree $2$ over $K$. Therefore, the Galois group $\Gal(L / F)$ is a subgroup of $\Gal(L / K)$ of index $2$. Since $\mathfrak{A}_{6}$ has no subgroup of index $2$, $\Gal(L / K)$ is not isomorphic to $\mathfrak{A}_{6}$.
\end{proof}

\begin{theorem}
Let $C$ be a smooth projective curve of genus $10$ with a faithful $\mathfrak{A}_{6}$-action. Then the morphism $f : C / \mathfrak{A}_{5} \rightarrow C / \mathfrak{A}_{6}$ can be written as
\[
f(w) = \frac{\lambda_{0} w^{4}((256\lambda_{0} - 25)w - 50)^{2}}{2500(w - 1)}
\]
in a suitable coordinate on $C / \mathfrak{A}_{5}$ and $C / \mathfrak{A}_{6}$ where $\lambda_{0}$ is a solution of the quadratic equation $442368 \Lambda^{2} - 72000 \Lambda + 3125 = 0$.

In particular, an $\mathfrak{A}_{6}$-invariant curve of genus $10$ is unique up to isomorphism.
\end{theorem}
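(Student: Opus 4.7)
The proof plan is to combine the preceding lemmas to pin down the extension $k(C/\mathfrak{A}_5)/k(C/\mathfrak{A}_6)$ up to coordinate change, and then use the fact that $k(C)$ is the Galois closure of this extension to recover $C$ uniquely. Concretely, Lemma \ref{genus10rami} tells us that $\pi_{\mathfrak{A}_5}$ has ramification data $(4,4,5)$ or $(2,2,2,5)$, and Lemma \ref{genus10f} then gives two explicit normal forms for $f$. The crucial observation is that Lemma \ref{genus10not(1)} rules out the first normal form: its discriminant is an odd-order polynomial in $t$, so $K(\psi)$ is a genuine quadratic subextension of the Galois closure, forcing the Galois group to contain a subgroup of index $2$, incompatible with $\mathfrak{A}_6$. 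Hence $f$ must be of the form in Lemma \ref{genus10f} (2).

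Once we know $f$ has the stated form, the remainder is a uniqueness argument at the level of function fields. By Lemma \ref{two isom for conj icosa} (1), the Galois closure $L$ of $k(C/\mathfrak{A}_5)/k(C/\mathfrak{A}_6)$ equals $k(C)$. Since $f$ is determined up to projective equivalence on both copies of $\mathbb{P}^1$, the extension $k(C/\mathfrak{A}_5)/k(C/\mathfrak{A}_6)$ is determined up to isomorphism, and so is its Galois closure. Thus $k(C)$ is unique up to isomorphism, and therefore $C$ is unique up to isomorphism.

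The one subtlety to address is that $\lambda_0$ is a root of a quadratic with two distinct roots, giving a priori two different candidate extensions. I would handle this by invoking Lemma \ref{two isom for conj icosa} (2): the two conjugacy classes of icosahedral subgroups of $\mathfrak{A}_6$ produce (possibly) non-isomorphic intermediate fields $k(C/\mathfrak{A}_5)$ and $k(C/\mathfrak{A}_5')$, but both sit inside the same function field $k(C)$, so the two values of $\lambda_0$ correspond to the two choices of conjugacy class of $\mathfrak{A}_5 \subset \mathfrak{A}_6$ and yield the same Galois closure $L = k(C)$. Since we already know from Section 2 that a genus $10$ curve with faithful $\mathfrak{A}_6$-action exists (namely the Wiman sextic), this unique Galois closure is realized, and both choices of $\lambda_0$ produce isomorphic $C$.

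The main obstacle I anticipate is the bookkeeping around the two roots of $442368\Lambda^2 - 72000\Lambda + 3125$: one must verify that swapping $\lambda_0$ with its Galois conjugate really corresponds to passing between the two conjugacy classes of $\mathfrak{A}_5 \subset \mathfrak{A}_6$, rather than giving genuinely different curves. Beyond this, no new computation is required; the theorem follows by assembling Lemmas \ref{two isom for conj icosa}, \ref{genus10rami}, \ref{genus10f}, and \ref{genus10not(1)}.
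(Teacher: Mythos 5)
Your proposal is correct and follows essentially the same route as the paper: exclude the $(4,4,5)$ normal form via Lemma \ref{genus10not(1)}, then use Lemma \ref{two isom for conj icosa}(2) to conclude that the two non-conjugate icosahedral subgroups of $\mathfrak{A}_{6}$ must realize the two roots $\lambda_{0}$ of the quadratic, so that the Galois closure of $f_{\lambda_{0}}$ recovers the same curve $C$ for either root. Nothing essential is missing.
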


\begin{proof}
By Lemma \ref{genus10f} and Lemma \ref{genus10not(1)}, for non-conjugate icosahedral subgroups $\mathfrak{A}_{5}$ and $\mathfrak{A}_{5}'$, the corresponding morphism $f : C / \mathfrak{A}_{5} \rightarrow \mathbb{P}^{1}$ and $f' : C / \mathfrak{A}_{5}' \rightarrow \mathbb{P}^{1}$ are written in the above form. (Note that the base $\mathbb{P}^{1}$'s can be identified since the branched points are same.) Since $f$ and $f'$ are not conjugate by Lemma \ref{two isom for conj icosa}(2), they have to correspond to the $2$ choice for $\lambda_{0}$. Thus the Galois closure gives $C$ for any of $\lambda_{0}$.
\end{proof}

\section{The $\mathfrak{A}_{6}$-invariant curve of genus $19$}

In this section, let $C$ be an $\mathfrak{A}_{6}$-invariant curve of genus $19$. (Note that Lemma \ref{two isom for conj icosa} and Proposition \ref{quotient curve condition} also hold in the case of genus $19$.) Fix an icosahedral group $\mathfrak{A}_{5} < \mathfrak{A}_{6}$. Recall that $\pi_{G}$ is the natural morphism $C \rightarrow C / G$ for $G = \mathfrak{A}_{6}$ or $\mathfrak{A}_{5}$.

\begin{lemma}\label{genus19rami}
Let $C$ be a smooth curve of genus $19$ with a faithful $\mathfrak{A}_{6}$-action.
\begin{enumerate}
\item $C / \mathfrak{A}_{6}$ is rational, and the ramification indices at the branched points of $\pi_{\mathfrak{A}_{6}}$ are $(2, 5, 5)$.
\item $C / \mathfrak{A}_{5}$ is rational, and the ramification indices at the branched points of $\pi_{\mathfrak{A}_{5}}$ are $(2, 2, 5, 5)$.
\end{enumerate}
\end{lemma}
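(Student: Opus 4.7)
The plan is to imitate the proof of Lemma \ref{genus10rami}, simply re-running the arithmetic with $g=19$ and then enumerating the remaining admissible tuples. Throughout, I will use the fact that each ramification index $r_i$ appearing for $\pi_G$ must be the order of some element of $G$, since the inertia group of a point in a Galois cover is a cyclic subgroup of $G$. Thus for $\pi_{\mathfrak{A}_6}$ we have $r_i \in \{2,3,4,5\}$ and for $\pi_{\mathfrak{A}_5}$ we have $r_i \in \{2,3,5\}$; moreover, because $\pi_{\mathfrak{A}_6} = f \circ \pi_{\mathfrak{A}_5}$, each ramification index of $\pi_{\mathfrak{A}_5}$ must divide the ramification index of $\pi_{\mathfrak{A}_6}$ at the image branch point.

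For part (1), I would compute
\[
L = \frac{2\cdot 19 - 2}{|\mathfrak{A}_6|} = \frac{36}{360} = \frac{1}{10} < \frac{1}{6},
\]
so Proposition \ref{quotient curve condition}(2) applies and forces $g'=0$, $s=3$. Since $L$ is neither $\tfrac{1}{12}$ nor $\tfrac{2}{15}$, the ramification tuple must be $(2, r_2, r_3)$ with $3 \leq r_2 \leq 5$ and $r_3 = 1/(\tfrac{2}{5} - \tfrac{1}{r_2})$. Substituting the three possible values of $r_2$ gives $r_3 = 15, \tfrac{20}{3}, 5$ respectively. Only $r_2 = r_3 = 5$ yields an integer that lies in $\{2,3,4,5\}$, so the ramification indices are $(2,5,5)$.

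For part (2), the computation
\[
L = \frac{2 \cdot 19 - 2}{|\mathfrak{A}_5|} = \frac{36}{60} = \frac{3}{5}
\]
puts us in the regime of Proposition \ref{quotient curve condition}(1). Case (i) would demand $r_1 = 1/(1-L) = \tfrac{5}{2}$, which is not an integer, so we are in case (ii): $g' = 0$ and $3 \leq s \leq 5$. The divisibility constraint reduces $r_i$ to $\{2,5\}$ (no element of $\mathfrak{A}_5$ of order $3$ can occur, since $3$ does not divide any ramification index of $\pi_{\mathfrak{A}_6}$). The Riemann--Hurwitz equation rearranges to
\[
\sum_{i=1}^{s} \frac{1}{r_i} = s - \frac{13}{5}.
\]
I would enumerate by $s$: for $s=3$ the required sum is $\tfrac{2}{5}$, below the minimum achievable sum $\tfrac{3}{5}$; for $s=5$ the required sum $\tfrac{12}{5}$ also fails every combination with entries in $\{2,5\}$; for $s=4$ the required sum $\tfrac{7}{5}$ is realized uniquely by two $2$'s and two $5$'s. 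This gives the tuple $(2,2,5,5)$.

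The main obstacle, such as it is, lies in being scrupulous about the two auxiliary constraints (element orders in $G$ and divisibility across $f$), since Proposition \ref{quotient curve condition} alone does not rule out arithmetically feasible but group-theoretically impossible tuples such as $(2,3,15)$ in part~(1). Once these constraints are in hand, the enumeration is a short, explicit check.
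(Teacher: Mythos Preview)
Your proof is correct and follows essentially the same approach as the paper: both parts apply Proposition~\ref{quotient curve condition} with the appropriate value of $L$, then use the divisibility constraint coming from $\pi_{\mathfrak{A}_6} = f \circ \pi_{\mathfrak{A}_5}$ (together with the available element orders) to cut down to a short enumeration. The only cosmetic differences are that the paper rules out $(2,3,15)$ by noting $\mathfrak{A}_6$ has no subgroup of order $15$ (equivalent to your element-order argument), and in part~(2) it parameterizes the number $k$ of indices equal to $2$ and solves $k=(8s-26)/3$, whereas you check each $s$ directly.
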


\begin{proof}
(1) We see
\[
L = \frac{2 \cdot 19 - 2}{|\mathfrak{A}_{6}|} = \frac{1}{10} < \frac{1}{6},
\]
and $L \neq \dfrac{1}{12}$ and $L \neq \dfrac{2}{15}$. By the Proposition \ref{quotient curve condition} (2), $C / \mathfrak{A}_{6}$ is rational and the ramification indices is $(2, r_{2}, r_{3})$ with $3 \leq r_{2} \leq 5$ and
\[
r_{3} = \frac{1}{\dfrac{2}{5} - \dfrac{1}{r_{2}}}.
\]
Since $r_{3}$ is an integer and $r_{2} \leq r_{3}$, we have $(r_{2}, r_{3}) = (3, 15)$ or $(5, 5)$. However, $\mathfrak{A}_{6}$ has no subgroup of order $15$. Thus the ramification indices of $\pi_{\mathfrak{A}_{6}}$ are $(2, 5, 5)$.

(2) We see
\[
L = \frac{2 \cdot 19 - 2}{|\mathfrak{A}_{5}|} = \frac{3}{5} < 1.
\]
By the Proposition \ref{quotient curve condition} (1), the genus $g$ of $C / \mathfrak{A}_{5}$ and the number $s$ of the branched points of $\mathfrak{A}_{5}$ satisfy one of the following:
\begin{enumerate}
\item[(i)] $g = 1$ and $s = 1$,
\item[(ii)] $g = 0$ and $3 \leq s \leq 5$.
\end{enumerate}
If the condition (i) holds, then the ramification index is
\[
r_{1} = \frac{1}{1-\dfrac{3}{5}} = \frac{5}{2}.
\]
However, this is not an integer. Hence, we have (ii) and $C / \mathfrak{A}_{5}$ is rational. We recall the equation (\ref{rami form}) in the proof of the Proposition \ref{quotient curve condition}:
\[
\sum_{i = 1}^{s} \frac{1}{r_{i}} = (s - 2) - \frac{3}{5}.
\]
On the other hand, since we have a morphism $f : C / \mathfrak{A}_{5} \rightarrow C / \mathfrak{A}_{6}$ with $\pi_{\mathfrak{A}_{6}} = f \circ \pi_{\mathfrak{A}_{5}}$, the ramification indices $r_{i}$ are divisors of $2$ or $5$, i.e., $r_{i} = 2$ or $5$. Thus there is a positive integer $k$ such that $\displaystyle\sum_{i = 1}^{s} \dfrac{1}{r_{i}} = \dfrac{1}{2} k + \dfrac{1}{5} (s - k)$. Since this is equal to $(s - 2) - \dfrac{3}{5}$, we see $k = \dfrac{1}{3}(8s - 26)$. If $s = 3$ or $5$, then $k$ is not an integer. Therefore, $s = 4$ and $(r_{1}, r_{2}, r_{3}, r_{4}) = (2, 2, 5, 5)$.
\end{proof}

By Lemma \ref{genus19rami} (1) and (2), $C / \mathfrak{A}_{6}$ and $C / \mathfrak{A}_{5}$ are rational. Thus the corresponding morphism $f : C / \mathfrak{A}_{5} \rightarrow C / \mathfrak{A}_{6}$ can be identified with a surjective rational map $\mathbb{P}^{1} \rightarrow \mathbb{P}^{1}$.

\begin{lemma}\label{genus19f}
Let $C$ be a smooth curve of genus $19$ with a faithful $\mathfrak{A}_{6}$-action. Take the coordinate on $C / \mathfrak{A}_{6}$ for which $1$ is the branched point of index $2$ and $0$ and $\infty$ are two branched points of index $5$. In a suitable coordinate, the following holds:
\begin{enumerate}
\item $f(w) = \dfrac{w^{5}(w + 4)}{64(w - 1)}$,
\item $f(w) = \dfrac{w^{5}((-783\lambda_{0} + 64)w + 972\lambda_{0})}{972(w - 1)}$, where $\lambda_{0}$ is a solution of the quadratic equation $11664\Lambda^{2} -1647\Lambda + 64 = 0$.
\end{enumerate}
\end{lemma}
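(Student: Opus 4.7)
The plan is to first pin down the fiber structure of $f$ over $\{0,1,\infty\}$, then normalize coordinates on $C/\mathfrak{A}_{5}$, and finally solve a polynomial system for $f$. For $Q\in f^{-1}(p)$ with $\pi_{\mathfrak{A}_{6}}$-ramification index $r$ at $p$, the chain relation $e_{\pi_{\mathfrak{A}_{5}}}(Q)\cdot e_{f}(Q)=r$ forces the pair $(e_{\pi_{\mathfrak{A}_{5}}}(Q),e_{f}(Q))$ to lie in $\{(1,5),(5,1)\}$ over $p\in\{0,\infty\}$ and in $\{(1,2),(2,1)\}$ over $p=1$. An $\mathfrak{A}_{5}$-orbit over $Q$ has size $60/e_{\pi_{\mathfrak{A}_{5}}}(Q)$ and the fiber $\pi_{\mathfrak{A}_{6}}^{-1}(p)$ has $360/r$ points, which together with the knowledge that $\pi_{\mathfrak{A}_{5}}$ has exactly two index-$5$ and two index-$2$ branch points pins down the combinatorics: $f^{-1}(0)$ and $f^{-1}(\infty)$ each consist of one point of multiplicity $5$ and one simple point, while $f^{-1}(1)$ consists of two double and two simple preimages. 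Riemann--Hurwitz for $f$ gives total ramification $10$, exactly accounted for by $(5-1)+(5-1)+2(2-1)$, so $f$ has no further ramification.

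Next, I use the $\PGL(3,\mathbb{C})$-restricted freedom of $\PGL(2,\mathbb{C})$ acting on $C/\mathfrak{A}_{5}\cong\mathbb{P}^{1}$ to place the order-$5$ zero of $f$ at $w=0$, the order-$5$ pole at $w=\infty$, and the simple pole at $w=1$. This forces
\[
f(w)=\frac{w^{5}(aw+b)}{w-1},\quad a,b\in\mathbb{C}.
\]
The requirement that $f^{-1}(1)$ consists of two double and two simple preimages then amounts to the factorization
\[
aw^{6}+bw^{5}-w+1 \;=\; a(w^{2}+pw+q)^{2}(w^{2}+sw+t)
\]
for some $p,q,s,t\in\mathbb{C}$, which on equating coefficients of $w^{0},w^{1},\dots,w^{5}$ yields six polynomial equations in six unknowns $(a,b,p,q,s,t)$.

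The last step is to solve this system with SINGULAR, eliminating $p,q,s,t$ to obtain an ideal in $\mathbb{C}[a,b]$. I expect the resulting ideal to split into two components: one giving a single solution $(a,b)=(1/64,1/16)$ corresponding to the form in (1), and a second component governed by a quadratic relation that, after an obvious reparametrization, becomes $11664\Lambda^{2}-1647\Lambda+64=0$ and yields the form in (2). The main obstacle is the computer algebra elimination and correctly identifying the two branches with the two stated presentations, but this is fully parallel to the calculation in Lemma \ref{genus10f}(2) and should be entirely routine.
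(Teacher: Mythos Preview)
Your proposal is correct and follows essentially the same route as the paper: determine the fiber structure of $f$ over $\{0,1,\infty\}$ from the multiplicativity of ramification indices together with the known branch data of $\pi_{\mathfrak{A}_5}$, normalize the three distinguished points on $C/\mathfrak{A}_5$ to $0,\infty,1$, write $f(w)=w^5(\kappa w+\lambda)/(w-1)$, impose that the numerator of $f(w)-1$ is a square times a quadratic, and eliminate with SINGULAR to obtain a cubic in $\lambda$ factoring as $(16\lambda-1)(11664\lambda^2-1647\lambda+64)$. Your orbit-counting argument for the fiber shapes is slightly more systematic than the paper's degree-contradiction argument that both $Q_i^{(5)}$ cannot lie over the same point, but the content is identical; no reparametrization is in fact needed, since $b=\lambda$ already satisfies the stated quadratic.
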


\begin{remark}
Similar to Section 3, the possibility of the form (1) is later excluded.
\end{remark}

\begin{proof}
Let $Q_{1}^{(r)}, Q_{2}^{(r)}, \cdots$ denote the branch points of $\pi_{\mathfrak{A}_{5}}$ of index $r$. We note that the ramification index of $\pi_{\mathfrak{A}_{6}}$ at $f(Q_{i}^{(r)})$ is divisible by $r$ for each $r$ and $i$. The branch points of $\pi_{\mathfrak{A}_{5}}$ are $Q_{1}^{(2)}, Q_{2}^{(2)}, Q_{1}^{(5)}$ and $Q_{2}^{(5)}$ by Lemma \ref{genus19rami}, and the map $f$ sends $Q_{i}^{(2)}$ to $1$ for any $i$ and $Q_{i}^{(5)}$ to $0$ or $\infty$ for each $i$.

Assume that both $Q_{1}^{(5)}$ and $Q_{2}^{(5)}$ are sent to $0$ (resp. $\infty$). They are unramified points for $f$. Thus $f^{-1}(0)$ (resp. $f^{-1}(\infty)$) has another point $Q$. Since the ramification index at $Q$ is equal to the ramification index of $0$ (resp. $\infty$), i.e., $5$, we see $\deg f^{*}(0) \geq 7$ (resp. $\deg f^{*}(\infty) \geq 7$). However, $\deg f = [\mathfrak{A}_{6} : \mathfrak{A}_{5}] = 6$. This is a contradiction.

By taking a suitable projective transformation of $\mathbb{P}^{1}$, we may assume that $f$ sends $Q_{1}^{(5)} = 0$ to $0$, $Q_{2}^{(5)} = \infty$ to $\infty$ and $1$ to $\infty$. Then we can write
\[
f(w) = \frac{w^{5}(\kappa w + \lambda)}{w - 1}
\]
where $\kappa, \lambda \in \mathbb{C}$ and $\kappa \neq 0$. On the other hand, since there are $2$ ramified points of $f$ of index $2$ at $1$, the numerator of 
\[
f(w) - 1 = \frac{\kappa w^6 + \lambda w^5 - w + 1}{w - 1}
\]
decomposes as $(w^{2} + \alpha w + \beta)^{2}(\kappa w^{2} + \gamma w + \delta)$ for $\alpha \neq 0, \beta, \gamma, \delta \in \mathbb{C}$. By equating the corresponding coefficients for $w$,
\begin{equation}\label{genus19func}
\left\{
\begin{array}{rcl}
1 & = & \beta^{2} \delta, \\
-1 & = & 2\alpha\beta\delta + \beta^{2}\gamma, \\
0 & = & (\alpha^{2} + 2\beta)\delta + 2\alpha\beta\gamma + \beta^{2}\kappa, \\
0 & = & 2\alpha\delta + (\alpha^{2} + 2\beta)\gamma + 2\alpha\beta\kappa, \\
0 & = & \delta + 2\alpha\gamma + (\alpha^{2} + 2\beta)\kappa, \\
\lambda & = & \gamma + 2\alpha\kappa. \\
\end{array} \tag{$\ast$}
\right.
\end{equation}
We use SINGULAR to solve this system of equations. We put \texttt{a} $= \alpha$, \texttt{b} $= \beta$, \texttt{c} $= \gamma$, \texttt{d} $= \delta$, \texttt{k} $= \kappa$ and \texttt{l} $= \lambda$. Let \texttt{I} be the ideal corresponding to (\ref{genus19func}). We check the elimination ideal \texttt{EI} of \texttt{I}.
\begin{code}{}{}
ring R = 0,(a,b,c,d,k,l),dp;

poly p0 = b2d - 1;
poly p1 = 2abd + b2c + 1;
poly p2 = a2d + 2bd + 2abc + b2k;
poly p3 = 2ad + a2c + 2bc + 2abk;
poly p4 = d + 2ac + a2k + 2bk;
poly p5 = c + 2ak - l;
ideal I = p0,p1,p2,p3,p4,p5;

//Eliminate a,b,c and d from I.
//Take EI to be this elimination ideal.
ideal EI = eliminate(I,abcd);
EI;
\end{code}
This code returns generators of \texttt{EI}.
\begin{result}{}{}
EI[1]=216l2-954k-799l+64
EI[2]=46656kl-864l2+166896k+136801l-11200
EI[3]=3024k2+2448kl-l2-200k
\end{result}
Its first generator gives the equation
\begin{equation}\label{genus19kappa}
216\lambda^{2} - 799 \lambda + 64 = 954\kappa. \tag{$\ast\ast$}
\end{equation}
Furthermore, eliminate \texttt{k} from \texttt{EI}.
\begin{code}{}{}
ideal J = eliminate(EI,k);
J;
factorize(J[1]);
\end{code}
The code returns the polynomial of $\lambda$
\[
186624\lambda^{3} - 38016\lambda^{2} + 2671\lambda - 64,
\]
and it is factored into $(16\lambda - 1)(11664\lambda^{2} -1647\lambda + 64)$. Therefore, $\lambda = \dfrac{1}{16}$ or a solution of $11664\lambda^{2} -1647\lambda + 64 = 0$. By the equality (\ref{genus19kappa}), if $\lambda = \dfrac{1}{16}$, then $\kappa = \dfrac{1}{64}$. Otherwise, we see $\kappa = \dfrac{1}{954} \left( - \dfrac{1537}{2}\lambda + \dfrac{1696}{27} \right) = \dfrac{-783\lambda + 64}{972}$.
\end{proof}

\begin{remark}\label{transform f}
For the rational function $f(w) = \dfrac{w^5(\kappa w + \lambda)}{w - 1}$, we calculate
\[
f\left(-\frac{\lambda}{\kappa w}\right) = \frac{\left(\frac{\lambda}{\kappa w}\right)^5 \left(\kappa\frac{\lambda}{\kappa w} + \lambda \right)}{\frac{\lambda}{\kappa w} - 1} = \frac{\lambda^6 (w - 1)}{\kappa^4 w^5 (\kappa w + \lambda)} = \frac{\lambda^6}{\kappa^4} \dfrac{1}{f(w)}.
\]
Since $\dfrac{\lambda^3}{\kappa^2} = -1$ for $(\kappa, \lambda) = \left( \dfrac{-783\lambda_{0} + 64}{972}, \lambda_{0} \right)$, we have $f\left(-\dfrac{\lambda}{\kappa w}\right) = \dfrac{1}{f(w)}$.
\end{remark}

Now we exclude the possibility of Lemma \ref{genus19f} (1) by considering the Galois closure of the corresponding extension $M / K$.

\begin{lemma}\label{genus19not(1)}
Let $\varphi$ be a surjective morphism $\mathbb{P}^{1} \rightarrow \mathbb{P}^{1}$ given by
\[
\varphi(w) = \dfrac{w^{5}(w + 4)}{64(w - 1)},
\]
$M / K$ the corresponding field extension and $L$ its Galois closure. Then $\Gal(L / K) \not\cong \mathfrak{A}_{6}$.
\end{lemma}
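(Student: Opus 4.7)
The plan is to imitate the strategy used in Lemma \ref{genus10not(1)}: exhibit a quadratic intermediate field $F$ of $L/K$, which forces $\Gal(L/K)$ to have a subgroup of index $2$, contradicting the simplicity of $\mathfrak{A}_6$ (which has no subgroup of index $2$). Since $\varphi:\mathbb{P}^1\to\mathbb{P}^1$ corresponds to the field inclusion $\mathbb{C}(t)\hookrightarrow \mathbb{C}(w)$ with $t=\varphi(w)$, the primitive element $w$ of $M/K$ satisfies
\[
p(w) \;:=\; w^{6} + 4 w^{5} - 64 t\, w + 64 t \;=\; 0,
\]
so we are reduced to studying this sextic over $K=\mathbb{C}(t)$.

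First I would compute the discriminant $\Delta=\Delta_{w}(p)\in K$ as the resultant $R\!\left(p,\partial p/\partial w\right)$ (by setting up the same Sylvester matrix as in the proof of Lemma \ref{genus10not(1)}, or equivalently by a short SINGULAR computation). The key point is to verify that $\Delta$ is not a square in $\mathbb{C}(t)$. The natural check is to compute the factorization of $\Delta$ as a polynomial in $t$ and look for some irreducible factor that appears with odd multiplicity; alternatively, by considering the valuation at $t=0$ or $t=\infty$ (where the ramification of $\varphi$ is concentrated), one expects an odd pole or zero order that directly certifies non-squareness without needing a full factorization.

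Once non-squareness of $\Delta$ is established, set $\psi:=\prod_{i<j}(\phi_{j}-\phi_{i})$ in $L$, where $\phi_{1},\dots,\phi_{6}$ are the roots of $p(w)$ in $L$. Then $\psi^{2}=\Delta$ lies in $K$ but $\psi\notin K$, so $F:=K(\psi)$ is a genuine quadratic subextension of $L/K$. Hence $\Gal(L/F)\leq \Gal(L/K)$ has index $2$. Because $\mathfrak{A}_{6}$ is simple and nonabelian, it contains no subgroup of index $2$ (any such subgroup would be normal with cyclic quotient $\mathcal{C}_{2}$), so $\Gal(L/K)\not\cong \mathfrak{A}_{6}$.

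The only nontrivial obstacle is the explicit discriminant computation and the verification that $\Delta$ is not a perfect square in $K$; everything else is formal. I expect $\Delta$ to share the qualitative shape seen in the genus $10$ case, namely a polynomial in $t$ whose lowest-degree term in $t$ has odd exponent (reflecting the fact that $\varphi^{-1}(0)$ contains $w=0$ as a single ramification point of odd index $5$), which immediately rules out squareness and completes the argument.
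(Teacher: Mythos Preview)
Your approach has a genuine gap: the discriminant of $p(w)=w^{6}+4w^{5}-64tw+64t$ is in fact a \emph{square} in $K=\mathbb{C}(t)$, so no quadratic subextension $K(\sqrt{\Delta})$ arises and the argument of Lemma~\ref{genus10not(1)} does not go through here. The paper states this explicitly in the remark preceding the proof.

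The reason is visible from the monodromy. Over $t=0$ the fiber is $w=0$ (multiplicity $5$) and $w=-4$ (simple), so the local monodromy is a $5$-cycle; over $t=\infty$ the poles are $w=\infty$ (order $5$) and $w=1$ (simple), again a $5$-cycle; over $t=1$ the numerator of $\varphi(w)-1$ has two double roots and two simple roots, giving a product of two disjoint transpositions. All three generators are \emph{even} permutations, so $\Gal(L/K)\subset\mathfrak{A}_{6}$ and hence $\Delta$ is a square in $K$. Your heuristic that a ramification point of index $5$ forces an odd vanishing order is backwards: the contribution to $\operatorname{ord}_{t=0}\Delta$ is $e-1=4$, which is even, not odd. (In the genus $10$ case the index-$4$ point contributed $3$, which is what made $\Delta$ a non-square there.)

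Because the discriminant resolvent is useless, the paper instead uses the degree-$15$ resolvent attached to $\omega_{0}=\rho_{1}\rho_{2}+\rho_{3}\rho_{4}+\rho_{5}\rho_{6}$, whose stabilizer in $\mathfrak{S}_{6}$ has index $15$. If $\Gal(L/K)=\mathfrak{A}_{6}$ then $\mathfrak{A}_{6}$ acts transitively on the $\mathfrak{S}_{6}$-orbit of $\omega_{0}$, so $f_{15}(x)=\prod_{\omega\in\Omega}(x-\omega)$ must be a power of a single irreducible polynomial over $K$. A direct computation shows $f_{15}$ factors as a product of irreducible polynomials of degrees $5$ and $10$, giving the contradiction.
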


\begin{remark}
Since the discriminant of $p(w)$ is square, we can not prove this in the same way as in Lemma \ref{genus10not(1)}. Here we use the \emph{resolvent} $f_{15}(x)$ used in \cite[Section 3]{Hagedorn}.

Take a polynomial $\omega_{0} \in K[\rho_{1}, \cdots, \rho_{6}]$. Under the action of $\mathfrak{S}_{6}$ on $\rho_{1}, \cdots, \rho_{6}$, let $G$ be the stabilizer subgroup $\Stab(\omega_{0})$. Let $\Omega$ be the orbit of $\omega_{0}$ under the action $\mathfrak{S}_{6}$. Then the polynomial $f(x) = \displaystyle\prod_{\omega \in \Omega} (x - \omega)$ is called a resolvent for $G$. For example, $f_{2}(x) = x^2 - \Delta$ where $\Delta$ is the discriminant is a resolvent for $\mathfrak{A}_{6}$.
\end{remark}

\begin{proof}
Since the fields $M$ and $K$ correspond to $\mathbb{P}^{1}$, we can write $M = \mathbb{C}(w)$ and $K = \mathbb{C}(t)$ where $w$ and $t$ are transcendental over $\mathbb{C}$. The pull back $\varphi^{*}$ is an injective homomorphism $\mathbb{C}(t) \rightarrow \mathbb{C}(w)$ given by $t = \varphi(w)$. Thus we see
\[
p(w) := w^{6} + 4w^{5} - 64 t w + 64 t = 0.
\]
Assume that the factorization of $p(w)$ over $L$ is $(w - \rho_{1}) \cdots (w - \rho_{6})$. Let $S_{i}$ be the elementary symmetric polynomial of degree $i$ for $\rho_{1}, \cdots, \rho_{6}$. Then their values are
\begin{equation}\label{list of coefficient}
\begin{array}{rcl}
S_{1} &=& -4,\\
S_{2} &=& 0,\\
S_{3} &=& 0,\\
S_{4} &=& 0,\\
S_{5} &=& 64t,\\
S_{6} &=& 64t.
\end{array}\tag{$\ast$}
\end{equation}
The Galois group $\Gal(L / K)$ is a subgroup of the permutation group of $\rho_{1}, \cdots, \rho_{6}$, which we identify with $\mathfrak{S}_{6}$.

Let $\omega_{0} = \rho_{1}\rho_{2} + \rho_{3}\rho_{4} + \rho_{5}\rho_{6}$. Then the orbit $\Omega = \mathfrak{S}_{6} \cdot \omega_{0}$ is of order $15$, and $\mathfrak{A}_{6}$ acts transitively on $\Omega$. To see this, note that there is a permutation $\sigma \in \mathfrak{S}_{6}$ with $\sigma \cdot \omega_{0} = \omega$ for any $\omega \in \Omega$. If $\sigma$ is even, then $\sigma \in \mathfrak{A}_{6}$. Suppose that $\sigma$ is odd. Since $(1 2) \cdot \omega_{0} = \omega_{0}$, we obtain $(\sigma (1 2)) \cdot \omega_{0} = \sigma \cdot ((1 2) \cdot \omega_{0}) = \sigma \cdot \omega_{0} = \omega$. Thus $\sigma (1 2)$ is an even permutation which sends $\omega_{0}$ to $\omega$.

Assume that $\Gal(L / K) = \mathfrak{A}_{6}$. We consider the polynomial of degree $15$
\[
f_{15}(x) = \prod_{\omega \in \Omega} (x - \omega(\rho_{1}, \cdots, \rho_{6})) = \sum_{i = 0}^{15} a_{i} x^{15 - i} \in (\mathbb{C}(t))[x]
\]
which decomposes over an intermediate field $N := K( \left\{ \omega(\rho_{1}, \cdots, \rho_{6}) \mid \omega \in \Omega \right\} )$ of $L / K$. Then, since $\mathfrak{A}_{6}$ acts transitively on $\Omega$, it also acts transitively on the roots of $f_{15}(x)$. The polynomial $f_{15}(x)$ must be a power of an irreducible polynomial over $K$. 

We calculate $f_{15}(x)$ from the coefficients of $p(w)$ with SINGULAR. We put \texttt{r(1)}, $\cdots$, \texttt{r(6)} to be variables corresponding to $\rho_{1}, \cdots, \rho_{6}$. First, we give the set $\Omega$ as \texttt{Omg} with the following code.
\begin{code}{}{}
LIB "sets.lib";
LIB "ellipticcovers.lib";

ring R1 = 0,r(1..6),dp;

list S6 = permute(list(1,2,3,4,5,6));
//S6 is the list of permutations of 1, 2, 3, 4, 5 and 6.

poly omg0 = r(1)*r(2)+r(3)*r(4)+r(5)*r(6);
Set Omg = list();
for(int i = 1;i<=720;i=i+1)
{
map sigma = R1,r(S6[i][1]),r(S6[i][2]),r(S6[i][3]),r(S6[i][4]),
r(S6[i][5]),r(S6[i][6]);
Omg=addElement(Omg, sigma(omg0));
}

// Put Omglist be a list of elements in Omg.
list Omglist = Omg.elements;
\end{code}
Next, we calculate the polynomial $f_{15}(x)$. We take \texttt{S[i]} to be $S_{i}$ and \texttt{I} be the ideal generated by the polynomials corresponding to (\ref{list of coefficient}) and $f_{15}(x)$. Then we take \texttt{EI} to be the ideal obtained by eliminating \texttt{r(1)}, $\cdots$, \texttt{r(6)} from \texttt{I}.
\begin{code}{}{}
LIB "poly.lib";
ring R2 =  0,(r(1..6),t,x),dp;
map phi = R1,r(1),r(2),r(3),r(4),r(5),r(6);

poly f15 = 1;
for(int j=1;j<=15;j=j+1)
{
f15 = f15*(x - phi(Omglist)[j]);
}

ideal S = elemSymmId(6);
ideal I = S[1]+4,S[2],S[3],S[4],S[5]-64t,S[6]-64t,f15;
ideal EI = eliminate(I,r(1)*r(2)*r(3)*r(4)*r(5)*r(6));

EI;
factorize(EI[1]);
\end{code}
The code returns the polynomial of degree $15$ and its factorization 
\begin{align*}
f_{15}(x) &= x^{15} - 4480 t x^{12} - 86016 t^2 x^{10} - 20480 t x^{11} + 5836800 t^2 x^9 - 49152 t  x^{10} \\
&\quad + 125829120 t^3 x^7 + 47513600 t^2 x^8 - 5922357248 t^4 x^5 - 2495610880 t^3 x^6 \\
&\quad + 199229440  t^2 x^7 - 93952409600 t^4 x^4 - 23395827712 t^3 x^5 + 671088640 t^2 x^6 \\
&\quad + 1417339207680 t^5 x^2 - 187904819200 t^4 x^3 - 104018739200 t^3 x^4 \\
&\quad + 805306368 t^2 x^5 + 2199023255552 t^6 + 15375982919680 t^5 x \\
&\quad + 1803886264320 t^4 x^2 - 375809638400 t^3 x^3 + 28312424415232 t^5 \\
&\quad - 5819680686080 t^4 x - 2061584302080 t^3 x^2 - 31679678775296 t^4 \\
&\quad - 5497558138880 t^3 x - 4398046511104 t^3 \\
&= (x^5 - 2560 t x^2 - 131072 t^2 - 15360 t x - 16384 t) \\
&\quad (x^{10}  - 1920 t x^7 + 45056 t^2 x^5 - 5120 t x^6 + 921600 t^2 x^4 - 32768 t x^5 \\
&\quad{\ } - 10485760 t^3 x^2 + 4915200 t^2 x^3 - 16777216 t^4 - 115343360 t^3 x \\
&\quad{\ } + 5242880 t^2 x^2 - 213909504 t^3 + 83886080 t^2 x + 268435456 t^2).
\end{align*}
Therefore, the polynomial $f_{15}(x)$ is not a power of an irreducible polynomial over $K$, and $\Gal(L / K) \neq \mathfrak{A}_{6}$.
\end{proof}

\begin{theorem}
Let $C$ be a smooth curve of genus $19$ with a faithful $\mathfrak{A}_{6}$-action. Then $C$ is isomorphic to the Galois closure curve of the morphism $f : \mathbb{P}^1 \rightarrow \mathbb{P}^1$ defined by
\[
f(w) = \dfrac{w^{5}((-783\lambda_{0} + 64)w + 972\lambda_{0})}{972(w - 1)}
\]
where $\lambda_{0}$ is a solution of the quadratic equation $11664\Lambda^{2} -1647\Lambda + 64 = 0$.

The two solutions give rise to isomorphic Galois closure curves. In particular, a smooth $\mathfrak{A}_{6}$-invariant curve of genus $19$ is unique up to isomorphism.
\end{theorem}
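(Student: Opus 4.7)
The plan is to imitate the genus $10$ argument at the end of Section 3. By Lemma \ref{genus19f}, after normalizing the coordinate on $C/\mathfrak{A}_{6}$ so that $1$ is the branch point of index $2$ and $0,\infty$ are the branch points of index $5$, the induced map $f:C/\mathfrak{A}_{5}\to C/\mathfrak{A}_{6}$ has the form (1) or the form (2). Lemma \ref{genus19not(1)} then rules out form (1), because the Galois closure of that extension does not have Galois group $\mathfrak{A}_{6}$. Therefore $f$ must be of the form
\[
f(w)=\frac{w^{5}\bigl((-783\lambda_{0}+64)w+972\lambda_{0}\bigr)}{972(w-1)},
\]
for some root $\lambda_{0}$ of $11664\Lambda^{2}-1647\Lambda+64=0$. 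Since $C$ is the Galois closure of $k(C/\mathfrak{A}_{5})/k(C/\mathfrak{A}_{6})$ by Lemma \ref{two isom for conj icosa}(1), the curve $C$ is determined by $f$, so it suffices to show that the two roots $\lambda_{0}$ yield isomorphic Galois closures.

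For this, I would exploit the fact that $\mathfrak{A}_{6}$ has two non-conjugate subgroups isomorphic to $\mathfrak{A}_{5}$ (items $20$ and $21$ of Table \ref{A6 subgp}). Writing $\mathfrak{A}_{5}$ and $\mathfrak{A}_{5}'$ for representatives of the two classes, we obtain two morphisms $f:C/\mathfrak{A}_{5}\to C/\mathfrak{A}_{6}$ and $f':C/\mathfrak{A}_{5}'\to C/\mathfrak{A}_{6}$ with common base; both satisfy the hypotheses of Lemma \ref{genus19f}, so each is given (in a suitable coordinate) by the formula above for some choice of $\lambda_{0}$. By Lemma \ref{two isom for conj icosa}(2), $f$ and $f'$ are \emph{not} equivalent over $k(C/\mathfrak{A}_{6})$, so they must correspond to the two distinct roots of $11664\Lambda^{2}-1647\Lambda+64=0$. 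Consequently, for either root, the Galois closure of the corresponding degree-$6$ extension reproduces the same curve $C$.

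Conversely, once it is shown that each root $\lambda_{0}$ gives rise to a field extension whose Galois closure has group $\mathfrak{A}_{6}$ (this is implicit in our construction from an existing $\mathfrak{A}_{6}$-curve, but one could also verify it directly, e.g.\ via an appropriate resolvent polynomial as in the proof of Lemma \ref{genus19not(1)}), any smooth projective genus $19$ curve with a faithful $\mathfrak{A}_{6}$-action is recovered as this Galois closure, so uniqueness up to isomorphism follows.

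The main obstacle is the last step: showing unambiguously that the two values of $\lambda_{0}$ really do produce isomorphic Galois closures rather than two distinct Galois extensions that happen to share the same Galois group. The cleanest route is the conjugacy argument sketched above, which lets the existence of the two classes of $\mathfrak{A}_{5}\subset\mathfrak{A}_{6}$ do the work automatically. As a sanity check, one can compare with Remark \ref{transform f}: the substitution $w\mapsto -\lambda/(\kappa w)$ turns $f$ into $1/f$, which, combined with swapping the roles of $0$ and $\infty$ in the target $\mathbb{P}^{1}$, visibly realizes an isomorphism between the two degree-$6$ covers obtained from the two roots, confirming that both Galois closures coincide.
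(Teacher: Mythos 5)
Your overall strategy coincides with the paper's: rule out form (1) of Lemma \ref{genus19f} via Lemma \ref{genus19not(1)}, then play the two non-conjugate icosahedral subgroups $\mathfrak{A}_{5}$, $\mathfrak{A}_{5}'$ against the two roots of $11664\Lambda^{2}-1647\Lambda+64=0$ using Lemma \ref{two isom for conj icosa}(2). However, the pivotal inference ``$f$ and $f'$ are not equivalent over $k(C/\mathfrak{A}_{6})$, so they must correspond to the two distinct roots'' has a gap, and it is exactly the point where genus $19$ differs from genus $10$. What you need is the contrapositive: if $f$ and $f'$ correspond to the same root $\lambda_{0}$, then they are equivalent over $k(C/\mathfrak{A}_{6})$. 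Lemma \ref{genus19f} only supplies coordinates $\alpha,\beta$ (resp.\ $\alpha',\beta'$) with $f=\alpha^{-1}\circ f_{\lambda_{0}}\circ\beta$ and $f'=\alpha'^{-1}\circ f_{\lambda_{0}}\circ\beta'$, and the target normalization is not canonical: the two branch points of $\pi_{\mathfrak{A}_{6}}$ of index $5$ are interchangeable, so $\alpha$ and $\alpha'$ may differ by the involution $\iota(z)=1/z$. The naive candidate $\beta'^{-1}\circ\beta$ then satisfies $f'\circ(\beta'^{-1}\circ\beta)=\alpha'^{-1}\circ\alpha\circ f$, which equals $f$ only when $\alpha=\alpha'$. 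The paper closes this by exhibiting $\eta(w)=972\lambda_{0}/((783\lambda_{0}-64)w)$ with $f_{\lambda_{0}}=\iota^{-1}\circ f_{\lambda_{0}}\circ\eta$, via Remark \ref{transform f}; only then does one obtain an isomorphism $C/\mathfrak{A}_{5}\rightarrow C/\mathfrak{A}_{5}'$ over $k(C/\mathfrak{A}_{6})$ and the desired contradiction. (In genus $10$ the branch indices $(2,4,5)$ are pairwise distinct, the target coordinate is canonical, and the short argument you give is precisely what the paper uses there.)

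Your closing ``sanity check'' misreads Remark \ref{transform f}: the identity $f_{\lambda_{0}}(-\lambda/(\kappa w))=1/f_{\lambda_{0}}(w)$ holds for a \emph{fixed} root $\lambda_{0}$, i.e., it is a self-equivalence of the single cover $f_{\lambda_{0}}$ lying over the swap of $0$ and $\infty$; it does not relate $f_{\lambda_{0}}$ to $f_{\overline{\lambda_{0}}}$, so it does not ``visibly realize an isomorphism between the two degree-$6$ covers obtained from the two roots.'' Used correctly, as above, it is exactly the missing ingredient, so the repair is short; but as written, the claim that the two roots yield isomorphic Galois closures is not established.
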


\begin{proof}
The first statement follows from Lemma \ref{genus19f} and Lemma \ref{genus19not(1)}. To show that the two value of $\lambda_{0}$ give rise to isomorphic Galois closure curves, we proceed as follows. Let us fix a curve $C$ and an action of $\mathfrak{A}_{6}$ on $C$, and take $\mathfrak{A}_{5}$ and $\mathfrak{A}_{5}'$ to be  non-conjugate icosahedral subgroups. It suffices to show that they correspond to different values of $\lambda_{0}$.

Assume to the contrary that they correspond to the same value of $\lambda_{0}$ and write $f : C / \mathfrak{A}_{5} \rightarrow C / \mathfrak{A}_{6}$ and $f' : C / \mathfrak{A}_{5}' \rightarrow C / \mathfrak{A}_{6}$ for the natural morphism. Then there are two isomorphisms $\alpha : C / \mathfrak{A}_{6} \rightarrow \mathbb{P}^1$ and $\beta : C / \mathfrak{A}_{5} \rightarrow \mathbb{P}^1$ with $f = \alpha^{-1} \circ f_{\lambda_{0}} \circ \beta$ where $f_{\lambda_{0}}$ is rational function define as $f_{\lambda_{0}}(w) = \dfrac{w^{5}((-783\lambda_{0} + 64)w + 972\lambda_{0})}{972(w - 1)}$. Then $\alpha$ sends each $2$ branched points of the ramification index $5$ to $0$ and $\infty$ and the branched point of the ramification index $2$ to $1 \in \mathbb{P}^1$. Similarly, there are two isomorphisms $\alpha' : C / \mathfrak{A}_{6} \rightarrow \mathbb{P}^1$ and $\beta' : C / \mathfrak{A}_{5} \rightarrow \mathbb{P}^1$ with $f' = \alpha'^{-1} \circ f_{\lambda_{0}} \circ \beta'$. Then the projective transformation $\iota := \alpha \circ (\alpha')^{-1}$ on $\mathbb{P}^1$ permutes $0$ and $\infty$ and fixes $1 \in \mathbb{P}^1$. Hence, $\iota(z) = z$ or $\dfrac{1}{z}$. We claim that there exists a projective transformation $\eta$ on $\mathbb{P}^1$ with $f_{\lambda_{0}} = \iota^{-1} \circ f_{\lambda_{0}} \circ \eta$. If $\iota = id_{\mathbb{P}^1}$, then we take $\eta = id_{\mathbb{P}^1}$. On the other hand, if $\iota(z) = \dfrac{1}{z}$, noting that we have $f_{\lambda_{0}}\left(\dfrac{972\lambda_{0}}{(783 \lambda_{0} - 64)w}\right) = \dfrac{1}{f_{\lambda_{0}}(w)}$ by Remark \ref{transform f}, then we take $\eta(w) = \dfrac{972\lambda_{0}}{(783 \lambda_{0} - 64)w}$.
\[
\xymatrix{
C / \mathfrak{A}_{5} \ar[ddd]_-{f} \ar[rd]^{\beta} & & & C / \mathfrak{A}_{5}' \ar[ddd]^-{f'} \ar[ld]_{\beta'} \\
& \mathbb{P}^1 \ar[d]_-{f_{\lambda_{0}}} \ar[r]^-\eta & \mathbb{P}^1 \ar[d]^-{f_{\lambda_{0}}} & \\
& \mathbb{P}^1 \ar[r]_-\iota & \mathbb{P}^1& \\
C / \mathfrak{A}_{6} \ar[ru]_{\alpha} \ar@{=}[rrr] & & & C / \mathfrak{A}_{6} \ar[lu]^{\alpha'}
}
\]
Then we see that $\beta'^{-1} \circ \eta \circ \beta$ is an isomorphism $C / \mathfrak{A}_{5} \rightarrow C / \mathfrak{A}_{5}'$ and $f' \circ (\beta'^{-1} \circ \eta \circ \beta) = f$, i.e., there is an isomorphism $k(C / \mathfrak{A}_{5}') \rightarrow k(C / \mathfrak{A}_{5})$ over $k(C / \mathfrak{A}_{6})$. By Lemma \ref{two isom for conj icosa}(2), the subgroup $\mathfrak{A}_{5}'$ is conjugate to $\mathfrak{A}_{5}$, which is a contradiction. Therefore, the morphism $f$ corresponds to $\lambda_{0}$ or $\overline{\lambda_{0}}$ and the morphism $f'$ corresponds to the other one.
\end{proof}

\end{document}